\newcommand{\hide}[1]{}
\newcommand{\ABox}{
\raisebox{3pt}{\framebox[6pt]{\rule{6pt}{0pt}}}
}
\newenvironment{proof}{{\bf Proof:}}{\hfill\ABox\medskip}
\newtheorem{theorem}{{\bf Theorem}}
\newtheorem{corollary}[theorem]{Corollary}
\newtheorem{lemma}[theorem]{Lemma}
\newtheorem{definition}[theorem]{Definition}
\newtheorem{obs}[theorem]{Observation}
\newtheorem{claim}[theorem]{Claim}
\newcommand{\lemlab}[1]{\label{lemma:#1}}
\newcommand{\thmlab}[1]{\label{thm:#1}}
\newcommand{\figlab}[1]{\label{fig:#1}}
\newcommand{\seclab}[1]{\label{sec:#1}}
\newcommand{\lemref}[1]{\ref{lemma:#1}}
\newcommand{\thmref}[1]{\ref{thm:#1}}
\newcommand{\secref}[1]{\ref{sec:#1}}
\newcommand{\figref}[1]{\ref{fig:#1}}
\def\a{{\alpha}}
\def\b{{\beta}}
\def\D{{\mathcal D}}
\def\o{{\omega}}
\def\d{{\delta}}
\def\R{{\mathbb{R}}}
\def\S{{\mathbb{S}}}
\def\defn#1{\textit{\textbf{\boldmath #1}}}
\newcommand{\squeezelist}{\setlength{\itemsep}{0pt}}
\newcommand{\JOR}[1]{{\color{red} JOR: [#1]}}
\newcommand{\jor}[1]{{\color{cyan} #1}}
\newcommand{\Anna}[1]{{\color{red} Anna: [#1]}}
\newcommand{\anna}[1]{{\color{blue} #1}}
\gdef\@fnsymbol#1{\ensuremath{\ifcase#1\or *\or \dagger\or \ddagger\or
   \mathsection\or \mathparagraph\or \|\or **\or \dagger\dagger\or
   \ddagger\ddagger\or \mathsection\mathsection\or
   \mathparagraph\mathparagraph\or \|\|\or ***\or \dagger\dagger\dagger\or
   \ddagger\ddagger\ddagger\or \mathsection\mathsection\mathsection\or
   \mathparagraph\mathparagraph\mathparagraph\or \|\|\|\else\@ctrerr\fi}}}
\title{%
Deltahedral Domes over Equiangular Polygons\footnote{%
Full version of abstract presented at EuroCG~\cite{DomesEuroCG}.
J.T. was supported by the Center for Foundations of Modern Computer Science (Charles Univ. project UNCE/SCI/004) and by project PRIMUS/24/SCI/012 from Charles Univ.}
} %title
\author{%
MIT CompGeom Group\thanks{%
Artificial first author to highlight that the other authors
(in alphabetical order) worked as an equal group.
Please include all authors (including this one) in your bibliography,
and refer to the authors as ``MIT CompGeom Group'' (without ``et al.'').}
\and
Hugo A. Akitaya\thanks{%
U. Mass. Lowell, \texttt{hugo\_akitaya@uml.edu}}
\and
Erik D. Demaine\thanks{%
MIT, \texttt{edemaine@mit.edu}}
\and
Adam Hesterberg\thanks{%
Harvard U., \texttt{achesterberg@gmail.com}}
\and
Anna Lubiw\thanks{%
U. Waterloo, \texttt{alubiw@uwaterloo.ca}}
\and
Jayson Lynch\thanks{%
MIT, \texttt{jaysonl@mit.edu}}
\and
Joseph O'Rourke\thanks{%
Smith College, \texttt{jorourke@smith.edu}}
\and
Frederick Stock\thanks{%
U. Mass. Lowell, \texttt{fbs9594@rit.edu}} 
\and
Josef Tkadlec\thanks{%
Charles U.
\texttt{jtkadlec@ist.ac.at}}
}%author
\begin{document}

\date{}
\maketitle

\begin{abstract}
A \emph{polyiamond} is a polygon composed of unit equilateral triangles, and
a \emph{generalized deltahedron}
is a convex polyhedron %where 
whose every face is a convex polyiamond.
We study a variant where one face may be an exception.
For a convex polygon $P$, 
if there is a convex polyhedron that has $P$ as one face and all the other faces are convex polyiamonds, then we say that 
% A \emph{delta dome} is a convex polyhedron where every face but one is a convex polyiamond.  If $P$ is the exceptional face, we say that
$P$ \emph{can be domed}.
Our main result is a complete characterization of 
which equiangular $n$-gons can be domed:
only if $n \in \{3,4,5,6,8,10,12\}$, and only with some 
% "certain" -> "some" to get down to a 4 line abstract
conditions on the integer edge lengths.
\end{abstract}

\section{Introduction}
\seclab{Introduction}
In the study of what can be built with equilateral triangles, the most well-known result is that there are exactly eight convex \defn{deltahedra}---polyhedra where every face is an equilateral triangle---with $n = 4,5,6,7,8,9,10,12$ vertices.
See references in \cite{bezdek2007proof} or
Wikipedia.\footnote{\url{https://en.wikipedia.org/wiki/Deltahedron}}
% \Anna{give reference, maybe wikipedia
% \url{https://en.wikipedia.org/wiki/Deltahedron}, or see refs. in [2]}.
What if coplanar triangles are allowed?
%We will allow coplanar equilateral triangles. 
In the plane, the polygons built of equilateral triangles are the \defn{polyiamonds}. Convex polyiamonds have 3, 4, 5, or 6 vertices.
The convex polyhedra with polyiamond faces are the 
``non-strictly convex deltahedra'', or \defn{generalized deltahedra}, following the nomenclature of Bezdek~\cite{bezdek2007proof}.
See the above cited Wikipedia article
for %some 
examples. 
%\Anna{How about (to keep it brief, but hopefully accurate): 
There are an infinite number of generalized deltahedra, though  the number of combinatorial types is finite since they have at most $12$ vertices.
%Although there are an infinite number of generalized deltahedra, they have at most $12$ vertices, so the number of combinatorial types is finite.
% Generalized deltahedra have
% at most $12$ vertices, so the number of combinatorial types is finite, though the class is infinite.
% \JOR{Not clear what is a ``class.'' My try:}
% \jor{though there are an infinite number of incongruent
% generalized deltahedra.}
There is no published characterization, though a forthcoming one is mentioned in~\cite{bezdek2007proof}.

% Our goal is to characterize when a convex polygon can be ``domed'' with a convex surface composed of equilateral triangles.  More formally, a \defn{delta dome} is a convex polyhedron where every face but one is a convex polyiamond. If $P$ is the exceptional face, we say that $P$ \defn{can be domed}, and that $P$ is the \defn{base} of the dome. Our goal is to characterize the bases of delta domes.
% \JOR{I think this def is not accurate, because we
% also dome polyiamonds. Here's my try:}
% \jor{...a \defn{delta dome} is a convex polyhedron
% $P \cup \D$
% with a convex polygon \defn{base} face $P$, and
% all other faces convex polyiamonds,
% forming the dome $\D$.}
% \Anna{Oh, I see the issue! 
% The dome should just be the non-base part.  The abstract needs fixing too.  We actually never use ``delta dome'' except to define ``doming''.
% How about:
Our goal (only partially achieved) 
is to characterize when a convex polygon can be ``domed'' with a convex surface composed of equilateral triangles.  
For a convex polygon $P$, 
if there is a convex polyhedron that has $P$ as one face and all the other faces are convex polyiamonds, then we say that $P$ \defn{can be deltahedrally domed}, or just \defn{domed} for short.  
%\Anna{If we use the title ``Deltahedral Domes over Equiangular Polygons'' then we could say that $P$ can be deltahedrally domed and that $\D$ is the deltahedral dome (dome for short).}
Here the \defn{deltahedral dome} (\defn{dome} for short), denoted by $\D$, is the part of the polyhedron excluding face $P$, and $P$ is called the \defn{base} of the dome.  Note that $P$ itself may or may not be a polyiamond.

%\hide{
%\subsection{Delta Domes}
%Let $P$ be a convex polygon lying in the $xy$-plane.
%We define a \emph{delta dome $\D$ over $P$} 
%(or just a \emph{dome} for short)
%to be a convex polyhedral surface composed of unit equilateral triangles 
%such that $P \cup \D$ is a convex polyhedron. We say that then $P$ can be \emph{domed}.
%We require that none of the triangles of $\D$ lie in the $xy$-plane:
%all have at least one vertex in the open halfspace $z > 0$.
%So $P \cap \D = \partial P$, and the polyhedron has positive volume.
%}

We assume that all the equilateral triangles have unit edge length,
so $P$ must be an integer polygon (with integer side lengths).
Here is a simple example:
\begin{lemma}
\lemlab{Rect}
Every integer rectangle can be domed.
\end{lemma}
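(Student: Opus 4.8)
The plan is to erect a \emph{tent} (gable roof) over the rectangle, realized as the convex hull of the base together with a single \emph{ridge} segment floating above it. Write the rectangle as $a \times b$ with integer side lengths, and assume without loss of generality that $a \le b$. Place it in the plane $z = 0$ with corners $(0,0,0)$, $(b,0,0)$, $(b,a,0)$, $(0,a,0)$, and introduce a ridge segment parallel to the long axis, centered over the rectangle, running from $(a/2,\,a/2,\,h)$ to $(b - a/2,\,a/2,\,h)$ for a height $h$ to be fixed below. The dome $\D$ will be the boundary, minus the base, of the convex hull of these six points. Convexity of the whole solid is then automatic, so the only work is to identify the faces and show each is a convex polyiamond.

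The non-base faces of this hull are two triangular \emph{end caps} and two slanted \emph{roof panels}, and the key is to choose $h$ so that these become convex polyiamonds. Each end cap is the isosceles triangle spanned by a short base edge (length $a$) and the nearer ridge endpoint; requiring its two legs to have length $a$, so that the cap is equilateral, pins down $h = a/\sqrt{2}$ and makes each end cap an equilateral triangle of side $a$. Each roof panel is then the quadrilateral spanned by a long base edge (length $b$) and the ridge (length $b - a$), with its two legs shared with the end caps and hence of length $a$. I would check that this quadrilateral is planar, that it is an isosceles trapezoid, and that its in-plane height equals $a\sqrt{3}/2$, so that it is exactly the convex polyiamond trapezoid with parallel sides $b$ and $b - a$ and legs of length $a$. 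All four faces are thus convex polyiamonds.

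The only case needing a separate word is the degenerate one: when $a = b$ the ridge collapses to a single apex, the two trapezoids degenerate to triangles, and the construction becomes the pyramid of four equilateral triangles of side $a$ (a scaled half-octahedron) over the square, which is still a valid dome. I expect the main obstacle to be verificational rather than conceptual, namely confirming that each roof panel is genuinely planar and that its dimensions coincide with those of an integer polyiamond trapezoid; this last point is exactly where the integrality of the side lengths is used, since both parallel sides $b$ and $b - a$ and the legs are integers precisely because $a, b \in \mathbb{Z}$. Once these elementary checks are in place, the tent domes every integer rectangle.
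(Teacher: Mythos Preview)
Your proposal is correct and is exactly the paper's ``roof'' construction: a ridge of length $b-a$ parallel to the long side, with two equilateral-triangle end caps and two isosceles-trapezoid roof panels, degenerating to a square pyramid when $a=b$. You simply supply the explicit coordinates, the height $h=a/\sqrt{2}$, and the planarity check that the paper leaves to its figures.
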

\begin{proof}
If the base $P$ is an $a \times b$ rectangle, $a \ge b$,
then one can construct a ``roof" dome
whose top ridge has length $a-b$, and whose opposite
faces are trapezoids and triangles.
See Figs.~\figref{RectInt} and~\figref{Rect_3x1_roof}.
\end{proof}

%%%%%%%%%%%%%%%%%%%%%%%%%%%%%%%%%Figure Begin
\begin{figure}[htbp]
\centering
\includegraphics[width=0.55\textwidth]{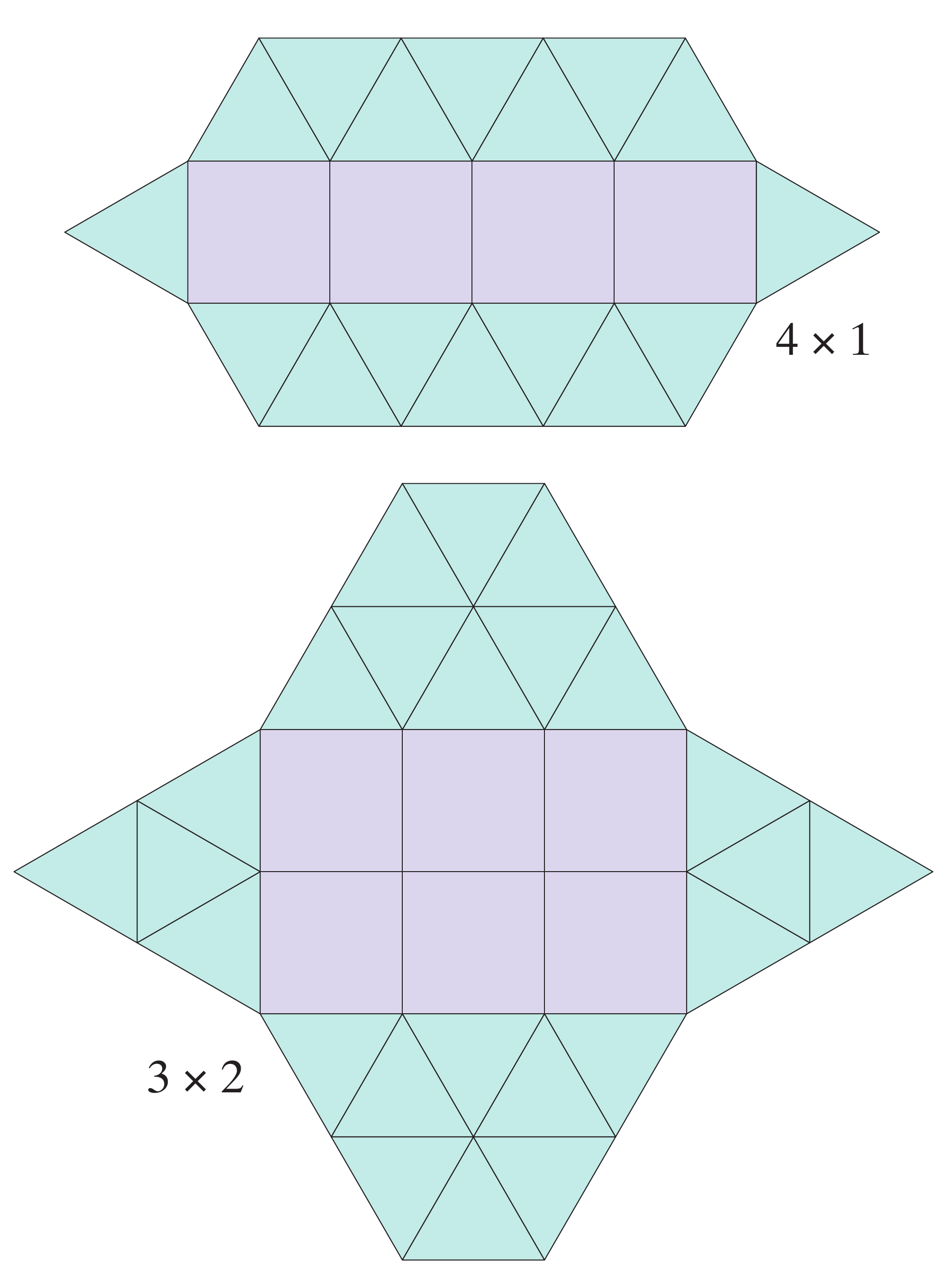}
\caption{Integer rectangle $a \times b$:
dome faces are  trapezoids and triangles.
}
\figlab{RectInt}
\end{figure}
%%%%%%%%%%%%%%%%%%%%%%%%%%%%%%%%%Figure End
%see Fig.~\figref{Rect_3x1_roof}.
%%%%%%%%%%%%%%%%%%%%%%%%%%%%%%%%%%Figure Begin
\begin{figure}[htbp]
\centering
\includegraphics[width=0.75\textwidth]{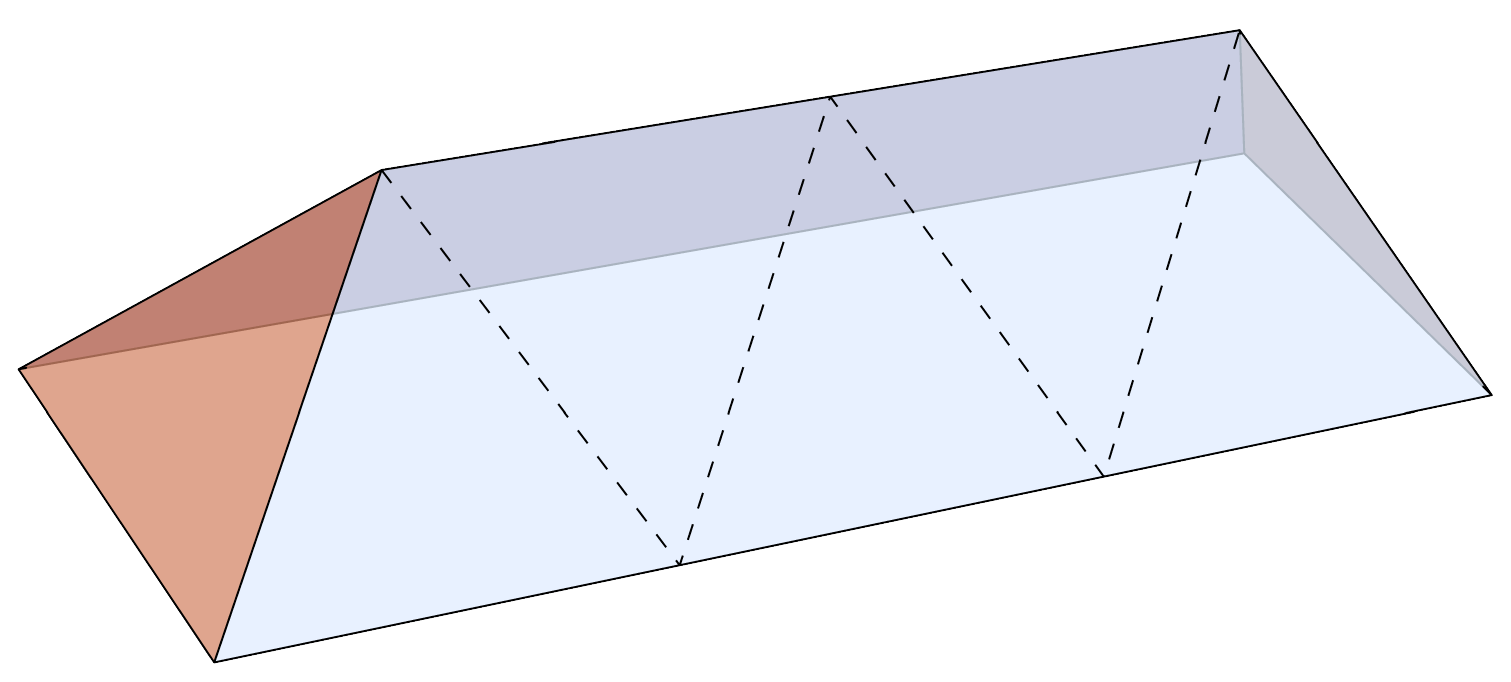}
\caption{``Roof'' dome over a $3 \times 1$ rectangle.}
\figlab{Rect_3x1_roof}
\end{figure}
%%%%%%%%%%%%%%%%%%%%%%%%%%%%%%%%%Figure End

%Let $P_n$ be an equiangular convex polygon with integral edge lengths..
\subsection{Main Theorem}
\begin{theorem}
\thmlab{EquiAng}
\label{thm:main}
%\Anna{Equiangular implies convex, so I've removed convex}
(a)~The only equiangular %convex 
polygons
that can be domed have $n$ vertices, where $n \in \{3,4,5,6,8,10,12\}$;
for each such $n$, any regular integer $n$-gon  
can be domed.
(b)~Moreover, 
for $n=3,4,5,6$ every 
%convex 
equiangular integer polygon can be domed, and for $n=8,10,12$, an 
%convex 
equiangular integer $n$-gon can be domed iff the %odd edge lengths
lengths of the odd-numbered edges 
are equal and the even edge lengths are those of an equiangular $\frac{n}{2}$-gon.
\end{theorem}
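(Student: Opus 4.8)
The plan is to prove the two directions separately, and within the ``only if'' direction to isolate a purely local constraint at each base vertex before attacking the global obstruction. For the local step, fix a vertex $v$ of the base $P$, let $\alpha = 180^\circ - 360^\circ/n$ be its interior angle, and let $\beta_v$ denote the total \emph{intrinsic} angle contributed by the dome faces at $v$. Since every dome face is a convex polyiamond, $\beta_v$ is a positive integer multiple of $60^\circ$. Convexity of the polyhedron at $v$ forces $\alpha + \beta_v < 360^\circ$, while the fact that $\D$ is a genuine upward-bulging cap forces the reverse inequality $\beta_v > \alpha$: the two base edges at $v$ lie in the base plane, so the $3$D angle between them is exactly $\alpha$, and the $3$D (chordal) angle between two rays of a convex cone is at most their intrinsic separation $\beta_v$, with equality only in the degenerate flat case. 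Combining $\alpha < \beta_v < 360^\circ - \alpha$ with $\beta_v$ a multiple of $60^\circ$ gives, after dividing by $60^\circ$, the window $3 - 6/n < \beta_v/60^\circ < 3 + 6/n$. Hence $\beta_v = 180^\circ$ is forced whenever $n \ge 6$, whereas for $n \le 5$ the values $120^\circ$ and $240^\circ$ also become available. This dichotomy is what splits the theorem into its small-$n$ and large-$n$ regimes.

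The heart of the ``only if'' direction is the regime $n \ge 6$, where every base vertex has dome angle exactly $180^\circ$. Intrinsically this means $\partial P$ has no corner on the dome side, so $\partial P$ is a \emph{closed geodesic} of the convex polyhedral disk $\D$, which is tiled by unit equilateral triangles; Gauss--Bonnet then pins the total interior curvature at exactly $360^\circ$, concentrated at interior vertices whose individual defects are multiples of $60^\circ$. My plan is to develop the ring of triangles incident to this geodesic into the triangular lattice and to track how the $n$ equally spaced edge directions of an equiangular polygon must sit in that lattice: each exterior turn of $360^\circ/n$ between consecutive base edges must be realized by the lattice geometry of the developed strip, and the strip must close up after one full loop. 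I expect this to force $360^\circ/n$ to be lattice-compatible, eliminating the odd values $n = 7, 9, 11$ and all $n \ge 13$ and leaving only $n \in \{6,8,10,12\}$; the small cases $n \in \{3,4,5\}$ are then checked directly from the enlarged local window. Pushing the same development inward, layer by layer, for even $n = 2m$ is what should expose the edge-length conditions of part~(b): the triangles forced along the boundary identify the odd-indexed edges with a single common length and collapse the even-indexed edges onto the boundary of an equiangular $m$-gon.

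I expect the main obstacle to be exactly this global closing-up argument. Ruling out the excluded $n$ (and the nonconforming edge vectors) requires controlling \emph{all} ways unit equilateral triangles can assemble into a convex cap along a prescribed polygonal geodesic, not merely the angle bookkeeping, so the delicate part is a combinatorial-geometric analysis of the developed boundary strip together with a proof that the inward peeling terminates consistently, rather than any single inequality. For the ``if'' direction the constructions should be comparatively routine. For $n = 3$ the base is an equilateral triangle of integer side $k$, domed by the three remaining faces of a regular tetrahedron of edge $k$; for $n = 4$ every integer rectangle is domed by the roof of Lemma~\lemref{Rect}; for $n = 5, 6$ I would exhibit explicit tent-like caps reusing the trapezoidal and triangular faces of that roof and verify convexity vertex by vertex. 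For even $n = 2m \in \{8,10,12\}$ satisfying the edge conditions, the polygon is a uniform corner-cut of an equiangular $m$-gon, so the dome is obtained by doming that $m$-gon (or, for $m=4$, its governing rectangle) and attaching a band of triangles across the truncated corners, with integrality and convexity following from the construction. Finally, specializing every construction to equal edge lengths yields the regular-$n$-gon statement of part~(a).
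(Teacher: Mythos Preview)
Your local step is correct and matches the paper: for $n\ge 6$ every base vertex carries exactly three dome triangles, so the boundary is an intrinsic geodesic of $\D$ and Gauss--Bonnet gives total interior curvature $2\pi$, hence at most six dome vertices. The paper proves exactly this as Lemmas~\lemref{3Triangles}--\lemref{6dome}.

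The genuine gap is everything after that. You explicitly defer the ``global closing-up argument,'' and the intrinsic-development plan you sketch is unlikely to work as stated: on the dome side the boundary is a \emph{geodesic}, so it develops to a straight line in the triangular lattice with \emph{no} turns at the base vertices. The exterior angle $360^\circ/n$ lives entirely on the base side and is an extrinsic datum; the lattice does not see it directly. What equiangularity actually controls is the \emph{dihedral angles} along the base edges, and exploiting that requires extrinsic reasoning you have not supplied.

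The paper's route is quite different and genuinely three-dimensional. Using the Gauss map at a base vertex (Lemma~\lemref{Normals}), it shows that among the two base-incident dome faces at that vertex, at least one has a downward normal. A projection argument then shows that distinct downward base-incident faces have disjoint dome vertices, so their number $d$ is at most $6$; since $d\ge n/2$, this gives $n\le 12$. For odd $n\ge 6$, a one-degree-of-freedom analysis of the three triangles at each base vertex (Lemma~\ref{lem:dihedrals}) forces all base dihedrals equal, hence all base-incident faces downward, giving $n\le 6$. None of this appears in your plan.

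For part~(b) your proposal is only a heuristic (``pushing the development inward layer by layer''). The paper instead classifies the faces along the base (``red'' edges are flat, ``blue'' downward faces are triangles), again via the normal/dihedral analysis, and reads off the edge-length constraints from the resulting equiangular $\tfrac{n}{2}$-gon lid. On the constructive side your $n=3,4$ are fine; for $n=6$ a ``tent'' will not cover a general polyiamond hexagon---the paper uses truncated tetrahedra (Lemma~\lemref{Polyiamond})---and for $n=8,10,12$ you still owe the convexity check that the paper carries out by computing the relevant dihedral sums.
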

%\begin{proof}
%\end{proof}

For small $n$, edge-length conditions for an equiangular integer polygon are known~\cite{ball2002equiangular}: 
%Anna: Ball gives conditions for n=5,6 (plus other results)
for $n=4$ these are rectangles; for $n=5$ there is only the regular pentagon; and for $n=6$ the edge lengths must be integers $a,b,c,a',b',c'$ with $a-a' = b'-b = c-c'$ (a 6-sided polyiamond);
see ahead to Fig.~\figref{Polyiamond6}.

Part~(a) of Theorem~\ref{thm:main} is proved in Sections~\ref{sec:RegPoly} and~\ref{sec:part-a}.
Part (b) is proved in Section~\ref{sec:main-part-b}.
We have established several results beyond the main theorem (for example
%, that all polyiamonds, equiangular or not, are domeable, and 
that there is no domeable polygon with $25$ or more vertices)---see Section~\ref{sec:further-results}.

%\hide{
%\noindent
%\Anna{This should be modified:}
%We just saw in Lemma~\lemref{Rect} that in fact all equiangular integer
%polygons of $n=4$ vertices---rectangles---can be domed. 
%\anna{This is not true for the other allowed values of $n$; we give the precise edge length conditions (for part (b) of the theorem) in Section~\ref{sec:further-results}. In that section we also mention some results beyond Theorem~\ref{thm:main}.}
%%However, it is not the case that every equiangular $n$-gon 
%%for $n$ in the claimed list can be domed.
%% Anna: the following is now incorporated into the Theorem:
%%But for each of those $n$, there is an equiangular $n$-gon that can be domed.
%} % end hide

\subsection{Glazyrin and Pak}
The source of our work derives from a paper by
Glazyrin and Pak: ``Domes over Curves''~\cite{glazyrin2022domes},
which answers a question posed by Richard Kenyon in 2005.\footnote{%
\url{https://gauss.math.yale.edu/~rwk25/openprobs/}.}
In~\cite{glazyrin2022domes}, a ``curve'' $P$ is a closed polygonal chain in $\R^3$,
and a dome is 
a PL-surface composed of unit equilateral triangles. 
They say that $P$ can be \emph{spanned}
if there is such a surface
whose boundary is $\partial P$.
We note the following two differences with our definitions:
\begin{enumerate}[(1)]
\item Our $P$ is a 2D convex polygon; theirs is a 3D possibly self-intersecting polygonal chain.
\item Our dome $\D$ is embedded (non-self-intersecting) and convex.
Their PL-surface is (in general) nonconvex, immersed, and self-intersecting.
\end{enumerate}
Under their conditions, they show that 
certain nonplanar rhombi cannot be spanned, which answers Kenyon's question in the negative.\footnote{%
Recent work~\cite{anan2024moduli}
extends the Glazyrin-Pak negative result to
show that ``generic'' integer polygons cannot be spanned. See~\cite{CobordismDomes} for a further generalization.
% \JOR{The first paper is a clear extension. The second paper generalizes to ``a polyhedron with multiple boundary components.''}
}
%JOR{``Our method is to generalize [1, Theorem 4.3] to allow for graph surfaces of arbitrary genus. We rephrase a cobordism as a sample polyhedron with multiple boundary components, and project down to smaller moduli space to account for any possible polygon to be chosen for the cobordism.''}
More interesting for our purposes, they prove that every planar regular polygon can be spanned
(their Theorem~1.4).
In contrast, our Theorem~\thmref{EquiAng} says that the regular $7$-, $9$-, and $11$-gons cannot
be domed, nor can any regular $n$-gon for $n>12$.
And here ``regular'' can be 
strengthened to ``equiangular.''
Compared to their results,
our conditions constrain the geometry and limit what can be domed.

% Anna: this was mentioned just after the Main Theorem
%Although we have established several results beyond Theorem~\thmref{EquiAng},
%here we focus on equiangular $P$.
%, only mentioning other results for future papers.

\section{Domed Regular Polygons}
%\label{sec:RegPoly}
\seclab{RegPoly}
We prove one part of Theorem~\ref{thm:main}(a)
by exhibiting domes over regular integer $n$-gons, for $n \in \{3,4,5,6,8,10,12\}$.
We will use $\bar{P}_n$ to denote a regular integer $n$-gon.
%(and later, $P_n$ for an equiangular $n$-gon).

\begin{itemize}
\item $3,4,5$ : $\bar{P}_n$ for $n=3,4,5$ can each be domed by a pyramid:
Fig.~\figref{Pyramids345}.
\item $6$ : Hexagonal antiprism: Fig.~\figref{SliceFigs}(a).
\item $8$ : A slice through a
gyroelongated
square diprism: Fig.~\figref{SliceFigs}(b).
\item $10$ : A slice through an icosahedron: Fig.~\figref{SliceFigs}(c).
\item $12$ : A slice through a hexagonal antiprism: Fig.~\figref{SliceFigs}(d).
\end{itemize}

A few remarks.
The pyramid pattern for $n=3,4,5$ cannot be extended to $\bar{P}_6$,
for that would result in a doubly-covered hexagon, not a
%delta 
dome by our definition.
For $n=8,10,12$, we show $\bar{P}_n$ as a slice of a convex polyhedron,
with the dome the upper half of the surface.
But we 
establish
that not every doming
of an equiangular polygon derives from a slice, see Section~\secref{NotSlice}.

%see Fig.~\figref{Pyramids345}.
%%%%%%%%%%%%%%%%%%%%%%%%%%%%%%%%%Figure Begin
\begin{figure}[htbp]
\centering
\includegraphics[width=1.0\textwidth]{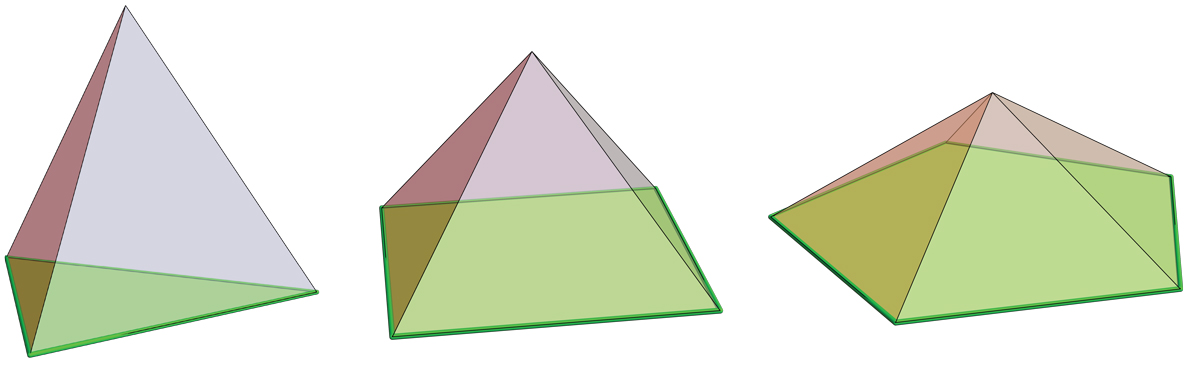}
\caption{Pyramids over $\bar{P}_n$, $n=3,4,5$.}
\figlab{Pyramids345}
\end{figure}
%%%%%%%%%%%%%%%%%%%%%%%%%%%%%%%%%Figure End

%see Fig.~\figref{SliceFigs}.
%%%%%%%%%%%%%%%%%%%%%%%%%%%%%%%%%Figure Begin
\begin{figure}[htbp]
\centering
\includegraphics[width=1.0\textwidth]{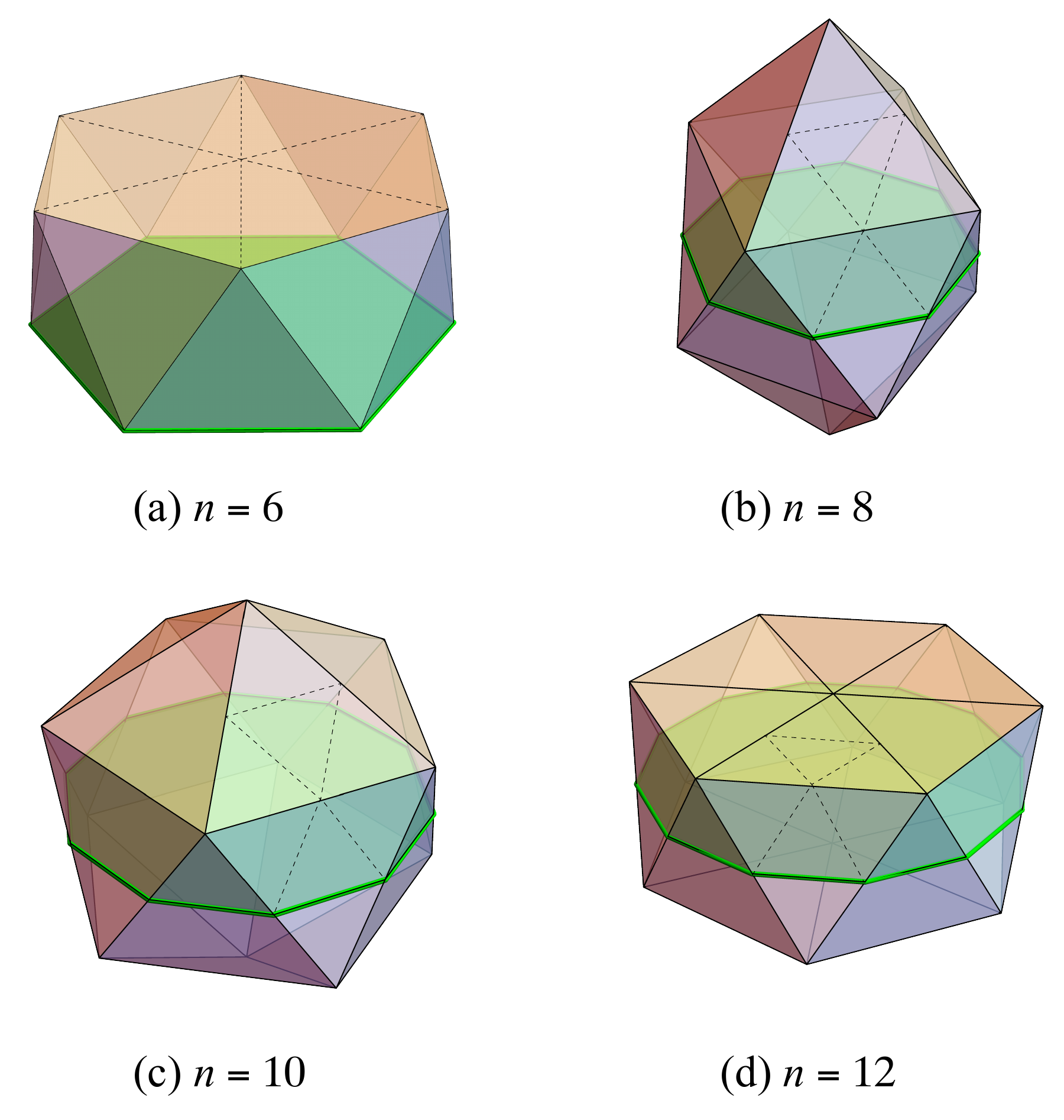}
\caption{Examples of $\bar{P}_n$ domes for $n=6,8,10,12$.
%\Anna{Could the label font size (``(a) $n=6$'') be reduced ...} %to match the text font size in the caption (``$n=6,8,10,12$'')?}}
%\JOR{I'd rather not. Caption font is so small!
%But I reduced it a bit.}
}
\figlab{SliceFigs}
\end{figure}
%%%%%%%%%%%%%%%%%%%%%%%%%%%%%%%%%Figure End

%see Fig.~\figref{IcosaSlicesDifferent}.
%%%%%%%%%%%%%%%%%%%%%%%%%%%%%%%%%Figure Begin
\begin{figure}[htbp]
\centering
\includegraphics[width=0.85\textwidth]{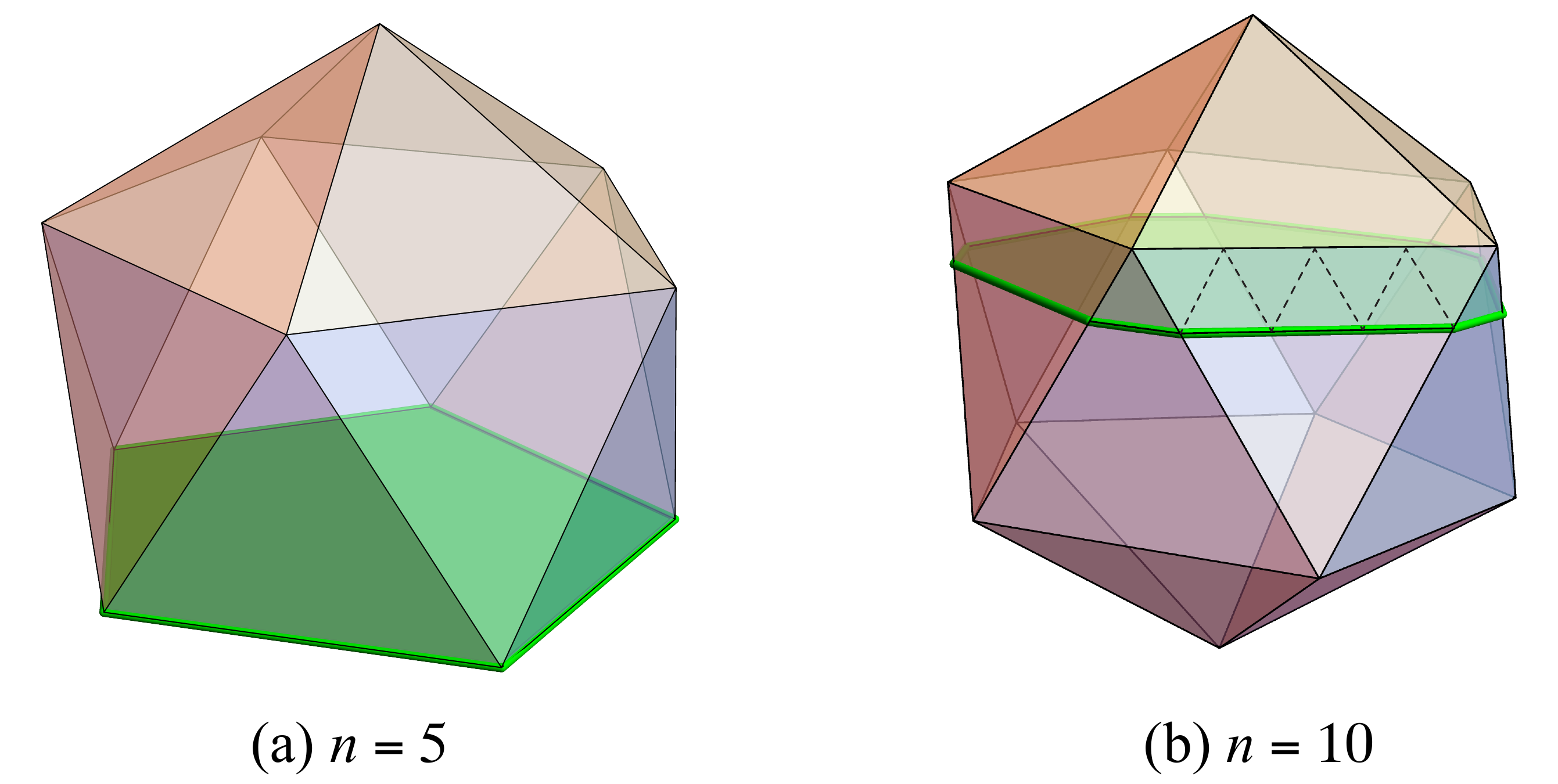}
\caption{(a)~A different dome over $\bar{P}_5$.
(b)~Doming an equiangular decagon with edge lengths alternating $1,3$.
}
\figlab{IcosaSlicesDifferent}
\end{figure}
%%%%%%%%%%%%%%%%%%%%%%%%%%%%%%%%%Figure End

Figures~\figref{Pyramids345} and~\figref{SliceFigs}
%These figures 
show one way to dome each regular polygon $\bar{P}_n$, but there are other solutions.
%These figures show that each of those regular polygons $\bar{P}_n$ can be domed, but says nothing
%about the number of different ways to dome.
%\Anna{Do we say more about the different ways to dome?}
For example, $\bar{P}_5$ can be domed by a low slice through the icosahedron
as shown in
Fig.~\figref{IcosaSlicesDifferent}(a).
And again, these figures illustrate regular polygons, special cases of equiangular polygons.
To give a hint of the further possibilities, Fig.~\figref{IcosaSlicesDifferent}(b) shows 
how to dome an equiangular decagon
$P_{10}$ whose edge lengths alternate $1$ and $3$.

%\JOR{Agreed on Notation: superfluous now.}
% \subsection{Notation}
% \seclab{Notation}
% % Anna: we don't really need the following sentence.
% %Here we fix notation for the proofs to follow; consult as needed.
% %The reader may skip these and return when needed.
% %\JOR{It is so convenient to use $v_i$ for a dome vertex and $b_j$ for a base vertex,
% %that I would like to redefine our coauthor notation as below.}

% %\Anna{Let's double-check which of these we actually use.}
% %\JOR{All used.} \Anna{$P_n$??}
% \Anna{I really think this notation section has become redundant.  $\bar{P}_n$ is only used in Section 2 and is defined there.  The others are used only in Sections 3 and 4 and are defined there. $P_n$ is not used at all.}
% \begin{itemize}
% \item $P$: Base polygon. $P_n$: Equiangular polygon.  $\bar{P}_n$: Regular polygon.
% %\item 
% $b_i$ : base vertex.
% \item $\D$: Dome.
% %\item 
% % $v_j$ : dome vertex.
% % \Anna{I don't think we use $v_j$}
% \item $V_k$ : The number of dome vertices 
% %of degree-$k$; so 
% % Anna: it's dangerous to call this degree $k$
% with $k$ incident equilateral triangles.
% %\item $B_k$ : A base vertex type with $k$ incident equilateral dome triangles.
% \item $\b$ : the base angle in an equiangular polygon.
% %\item 
% $\b_i$ : the base angle at vertex $b_i$ (when not all equiangular). 
% % \Anna{Check if we really need $\b_i$ and $b_i$.}
% % \JOR{It is used later.}
% \end{itemize}

%\clearpage
\section{Proof of Theorem~\thmref{EquiAng}(a): Restrictions on $n$}
\label{sec:part-a}
In this section we complete the proof of
the first half of Theorem~\thmref{EquiAng}:
The only equiangular %convex 
$n$-gons 
that can be domed have $n \in \{3,4,5,6,8,10,12\}$.
%\Anna{I deleted the blanket statement.}
%Throughout this section $\cal D$ is a dome over a base equiangular $n$-gon $P_n$, $n \ge 6$. 
% \JOR{I know this enables shortening the lemma
% statements, but I would prefer that each lemma
% state a truth independent of such a blanket
% statement.}
For a dome over an equiangular $n$-gon, $n \ge 6$, we use the following steps: 
\begin{enumerate}[(1)]
\item Each base vertex has three incident dome triangles.

\item Curvature constraints imply that 
the number of (non-base) dome vertices is at most 6.

\item Of the $n$ dome faces incident to base edges, at least half tilt toward the outside of the base
and no two of these faces share a dome vertex.
%and have  
%\anna{at least one ``overhanging'' dome vertex
%uniquely associated with the base edge.}
% \Anna{Trying to be more precise than ``private''. Our previous wording of ``unique'' could be interpreted as saying that each base edge has a single private vertex.}
%a ``private'' dome vertex. 
Furthermore, for $n$ odd we strengthen this claim to \emph{all} dome faces incident to base edges. 

\item Thus, since there are at most 6 dome vertices, 
%there are at most 12 base edges so 
$n \le 12$, and for $n$ odd, 
there are no solutions for $n \ge 6$.
%Anna: changing this to match the re-worded proofs
%$n \le 5$.
\end{enumerate}

Note that the base angle $\b$ of   
an equiangular $n$-gon
%is $\frac{n-2}{n} \pi$ Jayson:
is $\frac{n-2}{n} 180^\circ$
so if $n \ge 6$, then every base angle is $\ge 120^\circ$. 
%is $\ge 120^\circ$. 
This weaker assumption 
on a domeable convex 
$n$-gon
is enough for most of our argument.

\begin{lemma}
\lemlab{3Triangles}
\label{lem:3Triangles}
If a 
%\anna{[convex]}
base vertex  $b_i$ has base angle 
%\anna{$120^\circ \le \b_i < 360^\circ$}
$\b_i \ge 120^\circ$,
then it is incident to three dome triangles. %vertices.
%In particular, }
%in a dome over an equiangular $n$-gon $P_n$, $n \ge 6$,
%each base vertex 
% Anna: I removed $b_i$ from the lemma statement since it's not used in the proof
%$b_i$ 
%is incident to three dome triangles.
\end{lemma}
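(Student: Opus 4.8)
The plan is to analyze the cone of dome triangles meeting at a base vertex $b_i$ using a Gauss–Bonnet / angle-deficit argument. At $b_i$ the total angle must be strictly less than $360^\circ$ for the polyhedron to be convex (the vertex carries positive curvature), and the base face $P$ contributes exactly the base angle $\b_i$ at that corner. Hence the dome triangles incident to $b_i$ together occupy an angle strictly less than $360^\circ - \b_i$. Each such triangle is equilateral with apex angle $60^\circ$, so if $k$ dome triangles meet at $b_i$ they contribute exactly $60^\circ k$ to the angle sum around $b_i$ (counting only the triangles of the dome, not $P$).

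First I would pin down the integrality: the dome triangles at $b_i$ form a fan between the two base edges incident to $b_i$, and the two extreme dome edges must coincide with the two base edges (since the dome meets $P$ along $\partial P$). Therefore the $60^\circ k$ contributed by the dome exactly fills in the interior dihedral wedge, so the number $k$ of dome triangles at $b_i$ satisfies $60^\circ k + \b_i < 360^\circ$ (convexity, strict), giving $k < (360^\circ - \b_i)/60^\circ = 6 - \b_i/60^\circ$. Plugging in $\b_i \ge 120^\circ$ yields $k < 4$, so $k \le 3$. For the lower bound, $k \ge 1$ is immediate, but I must rule out $k = 1$ and $k = 2$: with $\b_i \ge 120^\circ$, a single triangle ($k=1$) contributes only $60^\circ$, and two triangles ($k=2$) contribute $120^\circ$; I would argue that $k$ dome triangles incident to $b_i$ meet $k+1$ dome edges out of $b_i$, two of which are the base edges, and the triangles must cover the full interior solid angle between the two base edges — and that this forces $k=3$ precisely when $\b_i \ge 120^\circ$, because too few triangles cannot span the wide interior angle while remaining a convex fan. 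Concretely, $k$ triangles incident to $b_i$ are bounded by exactly $k+1$ dome edges, and these edges, together with the constraint that consecutive triangles fold convexly, cannot close up the region between two base edges subtending $\b_i \ge 120^\circ$ unless there are at least three of them; I expect to make this rigorous by observing that with $k \le 2$ the fan would have to be flat (contributing $\ge \b_i \ge 120^\circ$ with only one or two $60^\circ$ angles), contradicting strict convexity or the angle count.

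The statement to prove is about the number of incident \emph{dome vertices} rather than dome triangles, so the last step is to convert: a fan of $k=3$ dome triangles about $b_i$ shares two internal dome edges and is bounded by the two base edges, so it touches exactly $k = 3$ dome vertices (the three apexes opposite $b_i$ among the triangles, with consecutive triangles sharing an edge and hence a vertex). I would verify this incidence count carefully, since a degenerate configuration where two of the fan triangles share their far vertex would reduce the count; convexity and the fact that the triangles are nondegenerate unit triangles rule this out.

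The main obstacle I anticipate is cleanly excluding $k = 1$ and $k = 2$ while allowing the boundary-wedge of angle $\b_i$ to be filled. The upper bound $k \le 3$ follows immediately from convex angle deficit, but the matching lower bound $k \ge 3$ is the delicate direction: I must show that one or two equilateral triangles cannot convexly bridge two base edges meeting at an interior angle of at least $120^\circ$, and this requires care about whether the fan edges can be dihedral-folded out of the base plane in a way that still spans $\b_i$. I expect the cleanest route is to track the projection of the fan onto the base plane and use the fact that folding a fan of $k$ triangles out of the plane only \emph{decreases} the planar angle it subtends, so $60^\circ k \ge \b_i \ge 120^\circ$ forces $k \ge 2$, and then a separate strict-convexity argument at $b_i$ upgrades this to $k \ge 3$.
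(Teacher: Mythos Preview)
Your core approach matches the paper's: bound the number $k$ of dome triangles at $b_i$ above via the angle-deficit condition $\b_i + 60^\circ k < 360^\circ$ (so $k\le 3$) and below via a spanning condition (so $k\ge 3$). The paper's whole proof is three sentences. You are overcomplicating the lower bound: the spanning inequality is \emph{strict}, so $k\ge 3$ follows in one step and no ``separate strict-convexity upgrade'' is needed. In the vertex figure at $b_i$, the side contributed by $P$ has length $\b_i$ and must be strictly shorter than the sum $60^\circ k$ of the remaining sides; equality would degenerate the link to an arc, forcing the dome triangles to be coplanar with $P$ and the base-edge dihedrals to be $180^\circ$, contradicting that $P$ is a face of the polyhedron. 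Equivalently, in your language: folding the fan out of the base plane \emph{strictly} decreases the planar angle it subtends unless no folding occurs, so $60^\circ k > \b_i \ge 120^\circ$ gives $k>2$ immediately.

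Your final conversion step is both unnecessary and wrong. The statement's ``three dome vertices'' is loose wording for ``three dome triangles''---this is clear from the paper's own proof (which counts triangles) and from how the lemma is used immediately afterward (the base-curvature lemma explicitly hypothesizes ``three dome triangles''). If you do try to count adjacent non-base vertices, a fan of three unit triangles at $b_i$ has four outgoing unit edges; the outer two lie along the base edges, so only the two inner ones reach genuine dome vertices. Thus $b_i$ is adjacent to two dome vertices, not three, and your attempted count does not match the statement read literally.
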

\begin{proof}
%The base angle $\b = \frac{n-2}{n} \pi$ of $P_n$
%is $\ge 120^\circ$.
%Therefore a base vertex  
Base vertex $b_i$ cannot 
be incident to just one
or two triangles, otherwise the total face angle is $\le 120^\circ$, which
is not enough to span $\b_i$.
%A base vertex 
Vertex $b_i$ cannot
be incident to four triangles,
because 
%$\b_i + \frac{4}{3} \pi \ge 2 \pi$, Jayson:
$\b_i + 240^\circ \ge 360^\circ$,
and similarly for 
five (or more) triangles.
%
%\hide{
%The base angle $\b = \frac{n-2}{n} \pi$ of $P_n$
%is strictly greater than $120^\circ$ for all $n \ge 7$.
%%(For $n=7$, $\b = \frac{5}{7} \pi \approx 129^\circ$.)
%%128.571
%Therefore a base vertex $b_i$ could not 
%be incident to just one
%triangle, or two 
%triangles, 
%because the total face angle is $\le 120^\circ$, which
%is not enough to span $\b$.
%Vertex $b_i$ could not 
%be incident to four triangles,
%because $\b + \frac{4}{3} \pi \approx 369^\circ 
%> 2 \pi$, and similarly for 
%five triangles.
%%368.571
%%
%%So each $b_i$ of $P_n$ must be incident to exactly three dome triangles.
%} % end hide
\end{proof}

%\noindent
%An immediate consequence is that the base curvature is $2\pi$:
%\noindent
From this we can analyze the base curvature:

\begin{lemma} 
\lemlab{BaseCurv}
If every base vertex $b_i$ is incident to three dome triangles,
then the sum of the curvatures at the base vertices is $2\pi$.
% \JOR{I put back the 3-triangle condition, which is
% I believe necessary.}
% \Anna: Right, since we've removed the blanket conditions
\end{lemma}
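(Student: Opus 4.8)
The plan is to compute the discrete (angular) curvature at each base vertex directly and then sum, rather than invoking a global identity. Recall that the curvature, i.e.\ the angle defect, at a vertex $v$ of a convex polyhedron is $\kappa(v) = 2\pi - \sigma(v)$, where $\sigma(v)$ is the total of the face angles incident to $v$. First I would identify the faces meeting a base vertex $b_i$: by hypothesis these are the base $P$, contributing the interior angle $\b_i$, together with exactly three dome triangles, each an equilateral unit triangle contributing an angle of $\pi/3$. Hence $\sigma(b_i) = \b_i + 3\cdot\frac{\pi}{3} = \b_i + \pi$, and so $\kappa(b_i) = 2\pi - \b_i - \pi = \pi - \b_i$.

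Next I would sum over the $n$ base vertices, using the fact that the interior angles of a convex $n$-gon sum to $(n-2)\pi$:
\[
\sum_{i=1}^{n} \kappa(b_i) \;=\; \sum_{i=1}^{n}(\pi - \b_i) \;=\; n\pi - \sum_{i=1}^{n} \b_i \;=\; n\pi - (n-2)\pi \;=\; 2\pi,
\]
which is exactly the asserted claim.

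I expect no genuine obstacle here; the only points needing care are bookkeeping rather than mathematics. The first is to confirm that the intended notion of curvature is the angle defect of the ambient convex polyhedron (not some dome-internal deficit); under that reading the computation is immediate and, reassuringly, consistent with Descartes' theorem, since the non-base dome vertices must then carry the complementary curvature $4\pi - 2\pi = 2\pi$ --- presumably the fact that drives the bound of at most six dome vertices in the next step. The second is to observe that any coplanarities among the three dome triangles (so that merged triangles form a larger polyiamond face) do not change the total incident angle $\pi$ they contribute, so the per-vertex computation $\kappa(b_i) = \pi - \b_i$ holds verbatim regardless of how the triangles group into faces.
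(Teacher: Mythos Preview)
Your proof is correct and is essentially identical to the paper's own argument: compute the curvature at each base vertex as $\o_i = 2\pi - (\b_i + \pi) = \pi - \b_i$ using the three incident dome triangles, then sum and invoke $\sum_i \b_i = (n-2)\pi$. Your side remarks about coplanarity and consistency with Descartes' theorem are extra commentary the paper omits, but the mathematical content matches exactly.
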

\begin{proof}
Let $\b_i$ be the angle of $P$ at vertex $b_i$.
Then the curvature at $b_i$ is $\o_i = 2 \pi - (\b_i + \pi)$, where the final $\pi$ term follows from
the assumption that $b_i$ is incident to three triangles.
Recalling that $\sum_i \b_i = \pi(n-2)$ for any simple polygon,
we have:
%\Anna{one line + fixed typos}
%\anna{
\begin{align*}
\sum_i \o_i  \ = \ \sum_i (\pi - \b_i) 
\ = \  n \pi - \sum_i \b_i 
 \ = \  n \pi - \pi (n-2) 
 \ = \  2 \pi.
\end{align*}
\end{proof}

%\hide{
%\begin{align*}
%\sum_i \o_i & = \sum_i (\pi - \b_i) \\
%%\sum_i \o_i & = \sum_i \pi - \o_i \\
%& = n \pi - \sum_i \b_i \\
%%& = n \pi - \sum_i \o_i \\
%& = n \pi - \pi (n-2) \\
%& = 2 \pi
%\end{align*}}

%\bigskip
%\noindent
%Note that the the sum could be greater or less than $2\pi$ without
%the three-triangles assumption.

% \hide{
% \Anna{We can eliminate this for space saving:}
% \begin{corollary}
% \corlab{GaussBonnet}
% Under the three-triangles assumption (in Lemma~\lemref{BaseCurv}),
% the total curvature of the dome vertices is $2 \pi$.
% \end{corollary}
% %\begin{proof}
% %This follows immediately from the Gauss-Bonnet theorem, which
% %says that the total curvature is $4 \pi$.
% %\end{proof}
% } % end hide
%

%\Anna{I'm tempted to move the following argument inside the next lemma so we can be clear about the hypotheses. But the trouble is our assumption about no $V_3$ vertices.  Hmmm.  I'll think about this.}

\begin{lemma}
\lemlab{6dome}
\label{lem:6dome}
If $\D$ is a dome over a convex polygon $P$ that has all angles $\ge 120^\circ$, then $\D$ has at most $6$ dome vertices.
% \Anna{we can word this more simply since we no longer have to cut off $V_3$ vertices.}
\end{lemma}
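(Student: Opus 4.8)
The plan is to bound the number of dome vertices by a discrete Gauss--Bonnet (curvature) argument applied to the whole closed polyhedron. Since the dome $\D$ together with the base $P$ forms a closed convex polyhedron, the total curvature summed over all its vertices equals $4\pi$. By Lemma~\lemref{3Triangles}, every base vertex is incident to exactly three dome triangles (using that each base angle is $\ge 120^\circ$), and by Lemma~\lemref{BaseCurv} the curvature contributed by the base vertices already sums to $2\pi$. Hence the curvature available to be distributed among the non-base dome vertices is exactly $4\pi - 2\pi = 2\pi$.

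Next I would lower-bound the curvature at each individual dome vertex. A dome vertex is a vertex of the convex polyhedron all of whose incident faces are (convex) polyiamonds, so every face angle incident to it is a multiple of $60^\circ$, i.e.\ at least $60^\circ$. Convexity forces the total face angle at the vertex to be strictly less than $360^\circ$, so the number of incident face-angles of $60^\circ$ is at most five, giving total angle at most $300^\circ$ and hence curvature $\o \ge 2\pi - \tfrac{5\pi}{3} = \tfrac{\pi}{3}$. Thus every dome vertex carries curvature at least $\tfrac{\pi}{3}$.

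Combining the two estimates finishes the proof: if there were $k$ dome vertices, they would together contribute at least $k \cdot \tfrac{\pi}{3}$ of curvature, but the total curvature available to them is only $2\pi$, so
\begin{equation*}
k \cdot \frac{\pi}{3} \le 2\pi, \qquad \text{hence} \qquad k \le 6 .
\end{equation*}
This yields the claimed bound of at most $6$ dome vertices.

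The main obstacle I anticipate is justifying that the curvature contributed by the dome vertices is \emph{exactly} $2\pi$ rather than merely at most $2\pi$. This requires the fact that $\D \cup P$ is a genuine closed convex polyhedron so that Descartes' total-angular-defect theorem (total curvature $=4\pi$) applies cleanly, together with the precise base-vertex computation of Lemma~\lemref{BaseCurv}; I would want to confirm that the dome has no vertices in the relative interior of the base face and that the base contributes no extra curvature beyond its $n$ corners. The per-vertex lower bound $\tfrac{\pi}{3}$ is the only other ingredient, and it follows directly from the polyiamond-angle and strict-convexity constraints, so the inequality is tight exactly when all six dome vertices see five triangles each.
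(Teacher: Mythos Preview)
Your argument is correct and follows essentially the same route as the paper: use Gauss--Bonnet to get total curvature $4\pi$, subtract the $2\pi$ at the base via Lemmas~\lemref{3Triangles} and~\lemref{BaseCurv}, and then bound each dome vertex's curvature below by $\pi/3$ (the paper phrases this last step as the weighted count $3V_3+2V_4+V_5=6 \ge V_3+V_4+V_5$, but it is the same inequality). Your worry about ``exactly $2\pi$'' versus ``at most $2\pi$'' is harmless for the upper bound you need---either suffices.
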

\begin{proof}
Let 
$V_3, V_4, V_5$ be the number of (non-base) dome vertices with $3,4,5$ incident triangles,
respectively.
% \JOR{Reviewer~4 thinks $V_k$ are not clearly defined. Seems clear to me.}
%If $\D$ has a $V_3$ vertex $v$, then $v$ is the apex of a rigid regular tetrahedron
%which we can remove without changing $P$.  
%Thus, $P$ has a dome $\D'$ with no $V_3$ vertex.
%We prove that $\D$ has at most 6 dome vertices.
% \JOR{Reviewer~1 argues that we need not set $V_3=0$. Seems correct.}
%
By the Gauss-Bonnet theorem, the total curvature of a convex polyhedron is $4\pi$. 
The curvature of a $V_k$ vertex is $2\pi - k \frac{\pi}{3}$.
By Lemmas~\ref{lem:3Triangles} and~\lemref{BaseCurv} 
(this is where we use the assumption that all base angles are $\ge 120^\circ$)
the curvature at the base vertices of any dome over $P$ is $2\pi$. 
%Since a $V_i$ vertex has curvature $2\pi - i\frac{\pi}{3}$ the curvature of the dome vertices is
Thus, in units of~$\pi$:
\begin{align*}
V_3 +  \frac{2}{3} V_4 + \frac{1}{3} V_5 &= 2 \; .
%3 V_3 +  2 V_4 +  V_5 &= 6 \;. 
\end{align*}
%Since $\D'$ has no $V_3$ vertices, this becomes:
%We may assume that $V_3=0$:
%Any $V_3$ vertex is the apex of a rigid regular tetrahedron
%which we can remove without %changing $P_n$.
%The equation above becomes:
%\begin{align*}
%2 V_4 + V_5 = 6 \; .    
%\end{align*}
%Therefore
Thus
% Anna: changed just to put the equation on one line.  If we use fullpage, this change isn't needed.
the number of dome vertices of $\D$ is 
$V_3 + V_4 + V_5 \le 3 V_3 +  2 V_4 +  V_5  = 6$. 
\end{proof}

\subsection{Face Normals and Private Dome Vertices}
\seclab{NormalsPrivate}

% \Anna{I reworked this section.  In the EuroCG version, the main lemma (Lemma~\ref{lem:even-n}) was proved with a sequence of nested lemmas, so the proof was the whole rest of the section.  In the first arXiv version, an ``end\{proof\}'' was added too early (you can see this in the old grayed out version below).  Anyhow, now that we give complete proofs, the nested lemmas are too confusing.  }

In this section we complete the proof of Theorem~\thmref{EquiAng}(a) by proving:

\begin{lemma}
\label{lem:12-and-6}
If $P$ is a domeable convex $n$-gon with all angles $\ge 120^\circ$, then $n \le 12$. 
Furthermore, there is no domeable equiangular $n$-gon for odd $n$ $ \ge 6$.   
\end{lemma}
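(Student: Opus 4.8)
The plan is to charge the $n$ base edges to the at most $6$ dome vertices supplied by \lemref{6dome}, losing at most a factor of two. Fix the base plane horizontally with the dome $\D$ above it, and for each base edge $e_i$ let $T_i$ be a dome triangle adjacent to $e_i$, with apex $a_i$ (a dome vertex). I would classify $T_i$ as tilting \emph{inward} or \emph{outward} by the sign of the vertical component of its outward unit normal: placing $e_i$ along the $y$-axis with the base interior in $x<0$, the apex of $T_i$ sits at $(x_a,\tfrac12,h)$ with $x_a^2+h^2=\tfrac34$ and $h\ge 0$, and the outward normal is proportional to $(h,0,-x_a)$. Thus $T_i$ tilts inward (normal points up, $x_a<0$) exactly when $a_i$ projects to the interior side of the line through $e_i$, and outward (an overhang, normal points down, $x_a>0$) exactly when $a_i$ projects beyond $e_i$. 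The whole argument then reduces to the two local claims in step~(3) of the outline.

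The first key step is to show that at least $\lceil n/2\rceil$ of the $T_i$ tilt outward. I would prove the local obstruction that two consecutive triangles $T_{i-1},T_i$ meeting at a base vertex $b_i$ cannot both tilt inward: by \lemref{3Triangles} the three dome triangles at $b_i$ occupy total face angle exactly $\pi$, and since $\beta_i\ge 120^\circ$, if both edge triangles leaned inward their apexes would be forced over the interior wedge at $b_i$, which I expect to contradict the $\pi$ angle budget together with local convexity of the fan at $b_i$. Granting this, the inward triangles form an independent set in the cycle $C_n$ of base edges, so there are at most $\lfloor n/2\rfloor$ of them and hence at least $\lceil n/2\rceil$ outward triangles.

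The second key step is that outward-tilting triangles have pairwise distinct (``private'') apexes. If $T_i$ and $T_j$ shared an apex $v$ at height $z_v$, then all four endpoints of $e_i$ and $e_j$ would lie on the circle of radius $\sqrt{1-z_v^2}<1$ cut from the base plane by the unit sphere about $v$, and $v$ would project strictly beyond both chords; I would argue that two disjoint unit-length chords of a circle of radius $<1$ with $v'$ on the far side of each are incompatible with the endpoints lying in convex position on an equiangular base. With distinctness in hand, each outward triangle consumes a private dome vertex, so $\lceil n/2\rceil\le 6$ by \lemref{6dome}, giving $n\le 12$. For odd $n\ge 6$ (so $n\ge 7$), I would upgrade ``at least half'' to ``all'': using that an odd cycle admits no perfect alternation, a parity argument shows that the no-two-consecutive-inward constraint, once combined with the distinctness of outward apexes, cannot tolerate even a single inward triangle. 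Then all $n$ triangles tilt outward with distinct apexes, forcing $n\le 6$ and contradicting $n\ge 7$; hence no equiangular $n$-gon with odd $n\ge 6$ can be domed.

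The main obstacle is the odd-$n$ strengthening, together with making the two local claims fully rigorous. Both rest on a delicate analysis of the three-triangle fan guaranteed by \lemref{3Triangles} and \lemref{BaseCurv}: precisely when consecutive edge triangles can lean inward, and precisely when one dome vertex can serve as the apex of two outward base triangles. I expect the counting and the Gauss--Bonnet budget ($\le 6$ dome vertices) to be the easy part, and the geometric case analysis at each base vertex—especially the parity upgrade that eliminates all odd $n\ge 7$—to require the most care.
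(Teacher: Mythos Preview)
Your overall architecture for the bound $n\le 12$ matches the paper's: show that at least half of the base-edge faces tilt outward, show that outward faces have private dome vertices, and invoke the six-vertex bound from \lemref{6dome}. The paper establishes the first claim by a Gauss-map (spherical cosine law) computation at each base vertex, and the second by observing that a downward face has $60^\circ$ base angles, so its dome vertices project into the exterior equilateral triangle on that base edge, and these triangles are pairwise disjoint. Your sketches for these two steps are different (and vaguer), but the outline is the same and would work if you fill them in.

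The real problem is the odd-$n$ case. Your ``parity argument'' is simply wrong: the constraint that no two consecutive base triangles tilt inward does \emph{not} force all of them to tilt outward on an odd cycle. For $n=7$ the set $\{1,3,5\}$ is an independent set in $C_7$, so you could have three inward and four outward triangles, and $4\le 6$ gives no contradiction. The same pattern leaves $n=9$ and $n=11$ untouched (with $5$ and $6$ outward, respectively). Nothing in your setup---neither the no-two-consecutive rule, nor the privacy of outward apexes, nor the two combined---upgrades ``at least $\lceil n/2\rceil$ outward'' to ``all outward'' for odd $n$.

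The paper's route is entirely different here and uses the \emph{equiangular} hypothesis in an essential way (not merely $\beta_i\ge 120^\circ$). Because every base vertex has the same fan of face angles $(\beta,60^\circ,60^\circ,60^\circ)$, fixing the dihedral $\delta_i$ at one base edge rigidly determines the dihedrals at the two adjacent base edges, and reflective symmetry forces $\delta_{i-1}=\delta_{i+1}$. Propagating around the cycle, for even $n$ the odd and even dihedrals separately agree, while for odd $n$ \emph{all} $\delta_i$ are equal. Since at least one base-edge face has a downward normal, they all do, so $d=n\le 6$. Your proposal never invokes equiangularity for the odd case, and without it (or some substitute that forces the dihedrals to coincide) the argument cannot close.
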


To prove this, we 
consider a dome over $P$ and
follow step (3) which states that, of the $n$ dome faces incident to base edges, at least half of them [and for odd $n$, all of them] tilt toward the outside of the base
and no two of these faces share a dome vertex.
%each have a ``private'' dome vertex.  
We begin by making this more formal.
Orient the dome with the base in the horizontal $xy$-plane. A dome triangle/face has an \defn{upward normal} if its outer normal has a 
positive $z$-component, and a \defn{downward normal} if its normal has a negative $z$-component.
%(This formalizes ``tilting towards the outside'').
Let $d$ be the number of dome faces with downward normals that are incident to base edges. 
Each of these faces %has 
is incident to
at least one dome vertex, and we prove that no two share a dome vertex.
%uniquely associated with the base edge,
%can be associated with a unique dome vertex, 
This implies that $d \le 6$ (since there are at most $6$ dome vertices).  We then prove that  
$d \ge n/2$, which implies that $n \le 2d \le 12$, and we prove that for odd $n$, 
$d \ge n$, which implies that $n \le d \le 6$.

% \old{It remains to show step (3): that, of the $n$ dome faces incident to base edges, at least half of them [and for odd $n$, all of them] tilt toward the outside of the base and each have a ``private'' dome vertex.
% % \JOR{Reviewer~4 thinks this is not clearly defined. Here's Pepa's revision:}
% % \pepa{Here we say that a dome vertex $v$ is \defn{private} to a base edge $e$ if $e$ is the unique base edge whose non-base face is incident to $v$.}
% %\Anna{How about:} 
% %
% We say that a dome vertex $v$ is \defn{private} if 
% there is a unique dome face incident to $v$ and to a base edge.
% %\Anna{What do we mean exactly?  That there is exactly one dome face incident to $v$ and to a base edge, right?  It might be clearer to say that.}
% Orient the dome with the base in the horizontal $xy$-plane. A dome triangle/face has an \defn{upward normal} if its normal has a 
% positive $z$-component, and a \defn{downward normal} if its normal has a negative $z$-component.
% (This formalizes ``tilting towards the outside'').
% }

We first analyze upward and downward normals of the faces at a base vertex.

\begin{lemma}[$\pm$Normals]
\lemlab{Normals}
\label{lem:Normals}
% \Anna{Whoops, the proof uses the stronger assumption that the base vertex has angle $\ge 120^\circ$, so we need to add it as a hypothesis here.}
Consider a base vertex $b_i$ with base angle $\ge 120^\circ$.  Suppose the three dome triangles incident to $b_i$
are $t_1, t_2, t_3$ where $t_1$ and $t_3$
%lie in dome faces $f_1, f_2, f_3$, where $f_1$ and $f_3$
are incident to the base edges at $b_i$
(possibly $t_2$ is coplanar with $t_1$ or with $t_3$, but not both).
%(possibly $f_1=f_2$ or $f_2=f_3$).  
See Fig.~\figref{3Tri_Overhead2D}.
Then 
$t_2$ has an upward normal 
and at least one of $t_1, t_3$
has a downward normal.
% \Anna{The notation $f_i$ is not consistent with the Appendix.  Change?}
% \JOR{We decided that's OK.}
\end{lemma}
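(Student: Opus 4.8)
The plan is to orient the dome with the base $P$ in the horizontal plane $z=0$ and the dome above it, and to work throughout with \emph{outward} face normals (pointing away from the interior of the convex polyhedron), so that $P$ itself has outward normal $-\hat z$. The first thing I would record is a dictionary between a face's geometry and the sign of the $z$-component of its outward normal. For a face incident to a base edge, such as $t_1$ or $t_3$, place that base edge along the $x$-axis with $P$ locally in $y>0$; if the face's third vertex is $(x_0,y_0,z_0)$ with $z_0>0$, its outward normal is proportional to $(0,-z_0,y_0)$. Thus a base-edge face has an \textbf{upward} normal iff its apex projects strictly to the interior side of the base-edge line (it ``leans inward''), and a \textbf{downward} normal iff its apex projects to the exterior side (it ``leans outward''); equivalently, downward corresponds to interior dihedral $>90^\circ$. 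Since $t_2$ touches the base only at $b_i$, it needs separate handling.

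For the claim about $t_2$, write $u_1,u_2$ for the unit edge vectors from $b_i$ to $v_1,v_2$; by Lemma~\lemref{3Triangles} these together with the base-edge directions $e_0,e_3$ satisfy $e_0\cdot u_1=u_1\cdot u_2=u_2\cdot e_3=\tfrac12$ (consecutive $60^\circ$ angles), with $(u_1)_z,(u_2)_z>0$. The essential extra ingredient is convexity: the plane of each face supports the whole polyhedron, so $P$ lies weakly on one side of $\mathrm{plane}(t_2)$ and meets it only at $b_i$. Hence the horizontal part $\hat n_h$ of the outward normal $\hat n$ of $t_2$ lies in the outward normal cone of $P$ at $b_i$, which (because $\b_i\ge120^\circ$) has angular width $\le60^\circ$. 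Now $\hat n\cdot u_1=0$ gives $\hat n_z\,(u_1)_z=-\hat n_h\cdot\mathrm{proj}(u_1)$, so $\hat n_z>0$ reduces to showing $\mathrm{proj}(u_1)$ lies on the interior side of the supporting line perpendicular to $\hat n_h$; the $60^\circ$ relations together with $\b_i\ge120^\circ$ force exactly this. A clean confirmation is the symmetric case, where one computes \[\hat n\ \propto\ \bigl(-r,\,0,\,p\bigr),\qquad p=\tfrac{1-\sin(\b_i/2)}{2\cos(\b_i/2)}\ge 0,\] so that $\hat n_z=p>0$.

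For the second claim, suppose toward a contradiction that both $t_1$ and $t_3$ lean inward. Project the fan to the horizontal plane and let $\alpha_1,\alpha_2,\alpha_3$ be the signed horizontal angles subtended at $b_i$ by $t_1,t_2,t_3$, measured positively toward the interior; they sum to $\b_i$. The both-inward assumption reads $\alpha_1,\alpha_3>0$, while a lifted apex shrinks each base-edge angle to $\alpha_1,\alpha_3<60^\circ$, with $|\mathrm{proj}(u_1)|=\tfrac1{2\cos\alpha_1}$ fixing the height of $v_1$ (similarly $v_2$). Substituting these heights into the middle relation $u_1\cdot u_2=\tfrac12$ expresses $\alpha_2$, and hence $\b_i=\alpha_1+\alpha_2+\alpha_3$, as a function of $\alpha_1,\alpha_3$; the goal is to show this is incompatible with $\alpha_1,\alpha_3>0$ once $\b_i\ge120^\circ$, the mechanism being that inward tilts make $v_1,v_2$ tall, anti-aligning their projections and inflating $\alpha_2$ past what $\b_i$ allows. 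I would again invoke the supporting-plane condition of convexity here, since it is genuinely needed: dropping it, one can produce unit vectors $u_1,u_2$ satisfying all three $60^\circ$ relations with \emph{both} faces leaning inward, but then $\mathrm{plane}(t_2)$ slices through $P$ and the configuration is not a convex dome.

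The main obstacle in both parts is converting the equilateral ($60^\circ$) edge constraints, coupled through the shared edge $v_1v_2$, into the sign of $\hat n_z$ and into the infeasibility of the both-inward case, all while correctly feeding in convexity. I expect the cleanest bookkeeping to be the Gauss image of the vertex $b_i$: the four faces $t_1,t_2,t_3,P$ map to a convex spherical quadrilateral $\hat n_1\hat n_2\hat n_3\hat n_P$ with interior angles $120^\circ,120^\circ,120^\circ,\pi-\b_i$ (the three $120^\circ$'s coming from the $60^\circ$ face angles) and spherical area $\pi-\b_i$, where $\hat n_P$ is the south pole and the edges $\hat n_P\hat n_1$, $\hat n_P\hat n_3$ are meridian arcs precisely because $e_0,e_3$ are horizontal. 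In this language, the statement ``$t_2$ has an upward normal'' says the opposite vertex $\hat n_2$ is northern, and ``at least one of $t_1,t_3$ has a downward normal'' says $\hat n_1,\hat n_3$ are not both northern, and it is the combination of these prescribed angles with the convexity of the quadrilateral that pins down the hemispheres.
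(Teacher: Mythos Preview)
Your final paragraph is the paper's approach exactly: the Gauss image of $b_i$ is a convex spherical quadrilateral $n_0n_1n_2n_3$ with $n_0$ at the south pole, interior angles $120^\circ$ at $n_1,n_2,n_3$ and $\pi-\b_i\le 60^\circ$ at $n_0$, and the two claims become ``$n_2$ is northern'' and ``not both of $n_1,n_3$ are northern.'' But the paper does not stop at this setup; its entire content is the missing computation. The paper draws the diagonal arc $x$ from $n_0$ to $n_2$, splitting the $120^\circ$ angle at $n_2$ into $A+B$ and the angle at $n_0$ into $C+D$, assumes WLOG $A\ge B$ (so $A\ge 60^\circ$), and applies the spherical law of cosines to the triangle $n_0n_2n_3$ to get
\[
\cos|x|\;=\;\frac{-\tfrac12+\cos A\cos C}{\sin A\sin C}\;<\;0,
\]
hence $|x|>90^\circ$ and $n_2$ is northern. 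A second application to the triangle $n_0n_1n_2$ (using $B\le 60^\circ$) gives $|n_0n_1|<90^\circ$, so $n_1$ is southern. Your proposal never performs this step, nor any substitute for it; you correctly describe what has to be shown and why the angles are what they are, but the quantitative fact that three $120^\circ$ corners and one $\le 60^\circ$ corner force this hemispheric arrangement is exactly the thing to be proved.

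Your earlier coordinate route is a genuinely different idea, but it has the same gap. For the ``$t_2$ upward'' claim you verify only the symmetric case and then assert that ``the $60^\circ$ relations together with $\b_i\ge120^\circ$ force exactly this''; that sentence is the lemma. For the ``at least one downward'' claim you explicitly write ``the goal is to show this is incompatible'' and stop. You also correctly observe that convexity is essential (without it there are counterexamples), but you do not say how to feed the supporting-plane condition into the $\alpha_1+\alpha_2+\alpha_3=\b_i$ bookkeeping to close the argument. So as it stands the proposal is a correct outline of two possible approaches, the second of which is the paper's, but neither is executed; the decisive ingredient in the paper is the pair of spherical-cosine computations after splitting along the diagonal $n_0n_2$.
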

%See Fig.~\figref{3Tri_Overhead2D}.
%%%%%%%%%%%%%%%%%%%%%%%%%%%%%%%%%Figure Begin
\begin{figure}[htbp]
\centering
\includegraphics[width=0.6\textwidth]{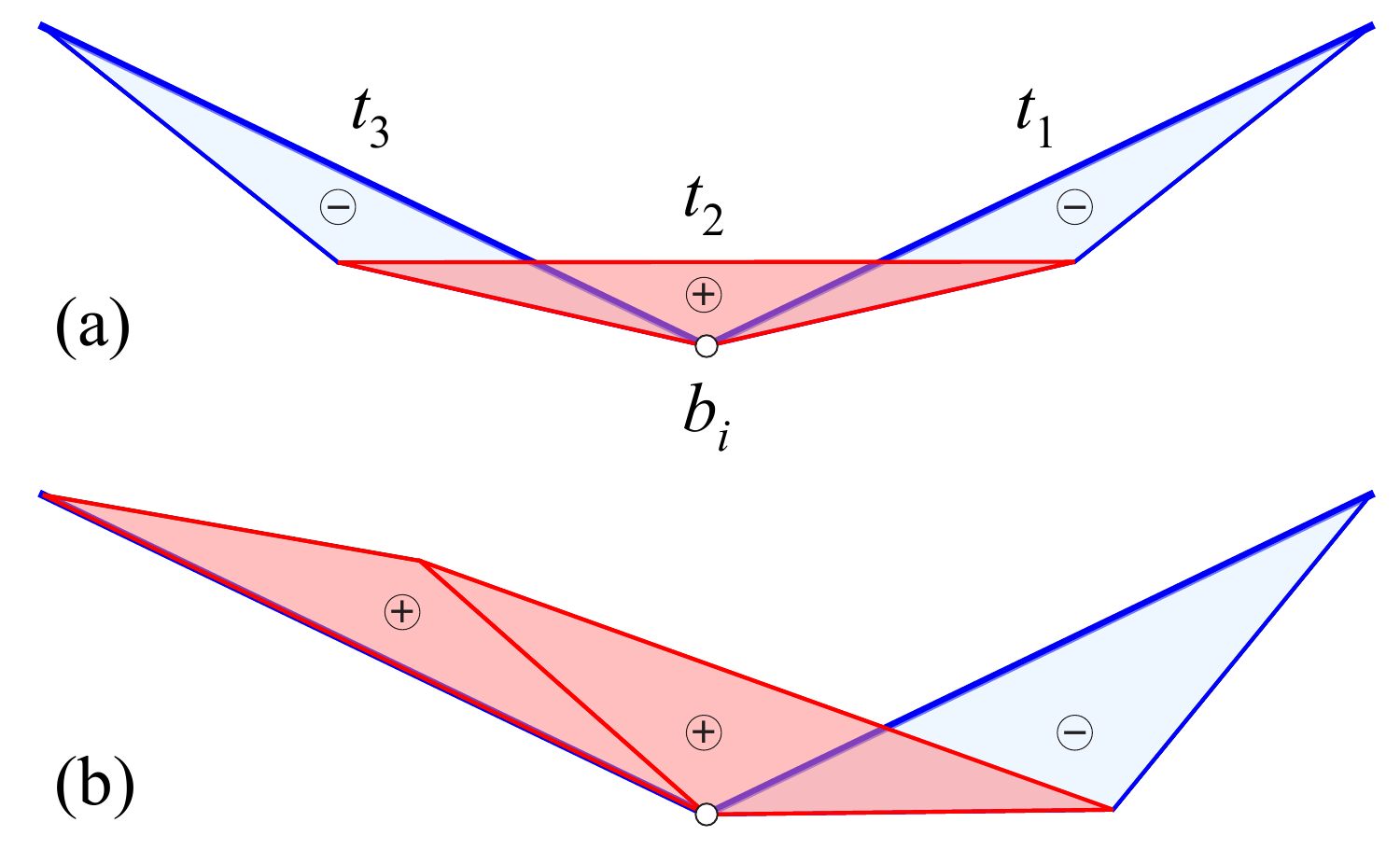}
\caption{
Overhead view of three  triangles
incident to base vertex $b_i$.
Triangles with upward normals pink, downward normals blue. (a)~Both $t_1$ and $t_3$ downward.
(b)~Only 
$t_1$ downward.
}
\figlab{3Tri_Overhead2D}
\end{figure}
%%%%%%%%%%%%%%%%%%%%%%%%%%%%%%%%%Figure End

%The argument depends on the Gauss map, which we review before starting the proof.

%see Fig.~\figref{GSph_generic}.
%%%%%%%%%%%%%%%%%%%%%%%%%%%%%%%%%Figure Begin
\begin{figure}[htbp]
\centering
\includegraphics[width=0.8\textwidth]{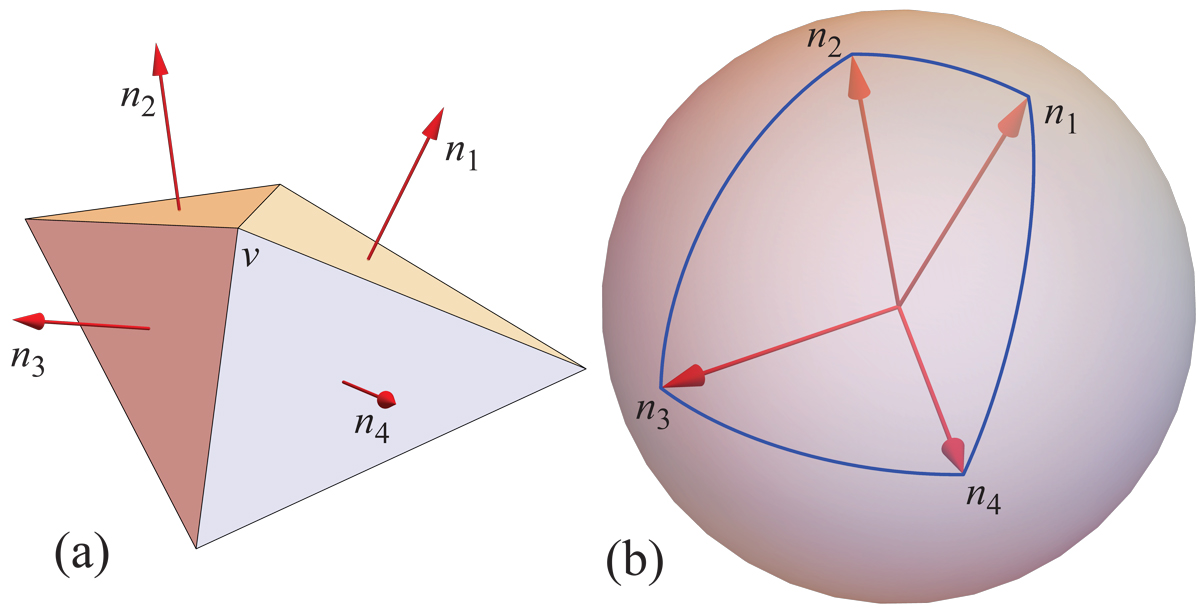}
\caption{The Gauss map for the vertex $v$.}
\figlab{GSph_generic}
\end{figure}
%%%%%%%%%%%%%%%%%%%%%%%%%%%%%%%%%Figure End

%\subsection{Gauss Map}
%\section{Gauss Map}
The argument depends on the Gauss map, which we first review.
We start with the Gauss map
% Anna: deferring $\S$ to clarify that what we start with is a generic $v$
%on the unit sphere $\S$ 
for the neighborhood of 
a generic vertex $v$ of
degree $4$,
as described for example in~\cite{biedl2007cauchy}.
%\Anna{$n_i$ bothers me.  ...} 
% But I see that it is one convention for face normals.  I also see it bolded, and I see $u_i$.  OK, I won't fuss about this.}
%\JOR{Let's leave it. Bold in the figs does not look great.}
The four face normal vectors $n_i$, $i=1,2,3,4$, map to points
on the unit sphere $\S$.
This mapping can be viewed as placing the unit normals $n_i$ at the origin $o$
of $\S$, so the tips of $n_i$ are points on $\S$.
See Fig.~\figref{GSph_generic}.
Vertex $v$ and the edges incident to $v$ each map to their set of normals.  Thus the edge $e_i$ between the faces with normals $n_i$ and $n_{i+1}$ maps to an arc on $\S$ from $n_i$ to $n_{i+1}$, and $v$ itself maps to a spherical quadrilateral on $\S$. If $e_i$ has dihedral angle $\d_i$, then the corresponding arc has length $\pi - \d_i$.
%The dihedral angles along the four edges incident to $v$ become
%arcs on $\S$, reflecting the rotation of one normal to the next across the edge they share.
%Dihedral angle $\d$ corresponds to an arc of length $\pi-\d$.
For example, a $\d_i$ near $\pi$ indicates two nearly coplanar faces and
so two nearly parallel normals, leading to a near zero arc length.
Each face angle $\a_i$ incident to $v$ is an angle between two edges,
which on $\S$ becomes an angle between the corresponding %dihedral 
arcs.
That angle is $\pi-\a_i$;
see, e.g.,~\cite{s-gpcpl-68}.
\medskip

% \Anna{We really need Theorem-restate.  I worry we'll change one version of a lemma and forget to change its repetition.}
%\noindent
%Recall Lemma~\ref{lem:Normals}:
%Consider a base vertex $b_i$ with base angle $\ge 120^\circ$.  Suppose the three dome triangles incident to $b_i$
%are $t_1, t_2, t_3$ where $t_1$ and $t_3$
%are incident to the base edges at $b_i$
%(possibly $t_2$ is coplanar with $t_1$ or with $t_3$, but not both).
%Then 
%$t_2$ 
%has an upward normal 
%and at least one of 
%$t_1, t_3$
%has a downward normal.
%

\begin{proof} [Proof of Lemma~\lemref{Normals}]
Let $n_i$ be the normal of $t_i$, $i=1,2,3$ and let $n_0$ be the normal of the base.
The proof analyzes the Gauss map 
for vertex $b_i$;
see Fig.~\figref{GSph_Pent}.
Because each triangle $t_1, t_2, t_3$ has angle $60^\circ$ incident to $b_2$,
on $\S$ the angles at $n_1,n_2,n_3$ are each $\pi-60^\circ=120^\circ$.
The normal 
$n_0$ is at the south pole of $\S$.
Because 
the base angle at $b_i$ is $\ge 120^\circ$,
we know the angle at $n_0$ on $\S$ is $\le 60^\circ$.
We first prove that $n_2$ is an upward normal, i.e., it lies above the equator
of $\S$.

Construct on $\S$ the arc $x$ from $n_0$ to $n_2$ that lies inside 
the spherical
quadrilateral $n_0 n_1 n_2 n_3$. 
Note that  $|x| \le 180^\circ$  (where $|x|$ is the length of $x$) because the spherical quadrilateral is contained in a lune from south to north pole with angle $\le  60^\circ$ (the angle at $n_0$).
Thus, proving that $n_2$ lies above the equator is equivalent to proving that $|x| > 90^\circ$.

% Note this issue about |x| < 180 degrees:
%The issue is that $x$ being \emph{inside} the quadrilateral might force it to be the longer of the two arcs from $n_0$ to $n_2$.  But in that case (IF it could arise, which is can't) it's NOT enough to show $|x| > 90^\circ$ ($x$ might be very close to the south pole).  

%This fig. from coauthor helps: https://coauthor.csail.mit.edu/file/ix8tk5zqWNtE7GaCX

%the base angle is >= 120 so the angle at n_0 is <= 60.

%Maybe you could enhance Fig. 9 to show the sides of the lune as in Fig. 2 in the figure I pointed to? 

% This was Erik's comment in coauthor, now resolved above
%Note that $|x|$ could be $> 180^\circ$, i.e., $360^\circ$ minus its dihedral angle,
%but still it's enough to show $|x| > 90^\circ$.
%
Arc $x$ splits the $120^\circ$ angle at $n_2$ into two parts, $A,B$, with $A$ on the $n_3$ side. 
% \Anna{In order to have $A$ on the $n_3$ side, please switch $n_1$ and $n_3$ in the figure (the original in coauthor is correct).  In Section 4 we used the other ordering of $t_1, t_2, t_3$ (this was remarked on in coauthor), but maybe let's not make things consistent at this late stage.}
Without loss of generality, assume $A \ge B$; so $A \ge 60^\circ$.  We also split the angle at $n_0$ into two parts $C,D$, with $C$ on the $n_3$ side.  We have $0 < C,D \le 60^\circ$. 

Apply the basic cosine law for spherical triangles
to the triangle $n_3 n_2 n_0$:
%, see https://en.wikipedia.org/wiki/Spherical_trigonometry#Supplemental_cosine_rules

$$\cos 120^\circ = - \cos A \cos C + \sin A \sin C \cos |x| \;.$$

Rearranging and using $\cos 120^\circ = -\frac{1}{2}$ leads to

$$\cos |x| = \frac{- \frac{1}{2} + \cos A \cos C}{\sin A \sin C} \;.$$

Now $A \ge 60^\circ$ so $\cos A \le \frac{1}{2}$.  Also, $\cos C < 1$.  Thus the numerator is negative.  Because the %$\sin$
sine terms are positive, the denominator is positive.  Thus $\cos |x| < 0$, 
and so $|x| > 90^\circ$. This establishes the first claim of the lemma: $n_2$ is an upward normal.

Now we turn to showing at least one of $n_1,n_3$ lies below the equator.
Let $y$ be the arc $n_0 n_1$.
Again apply the cosine law:

$$\cos B = - \cos 120^\circ \cos D + \sin 120^\circ \sin D \cos |y| \;.$$

$$\cos |y| = \frac{\cos B - \frac{1}{2} \cos D}{\sin 120^\circ \sin D} \;.$$

Now $B \le 60^\circ$ so $\cos B \ge \frac{1}{2}$.  Also, $\cos D < 1$.  Thus the numerator is positive.  Because the %$\sin$ 
\anna{sine} 
terms are positive, the denominator is positive.  Thus $\cos |y| > 0$, so $|y| < 90^\circ$, i.e., $n_1$ is a downward normal. 
\end{proof}

\medskip
\noindent
The asymmetry derives from the assumption that $A \ge B$. So we draw no conclusion about $n_3$'s location on $\S$. And indeed it is possible for $n_3$ to be upward or downward.

%See Fig.~\figref{GSph_Pent}.
%%%%%%%%%%%%%%%%%%%%%%%%%%%%%%%%%Figure Begin
\begin{figure}[htbp]
\centering
\includegraphics[width=0.50\textwidth]{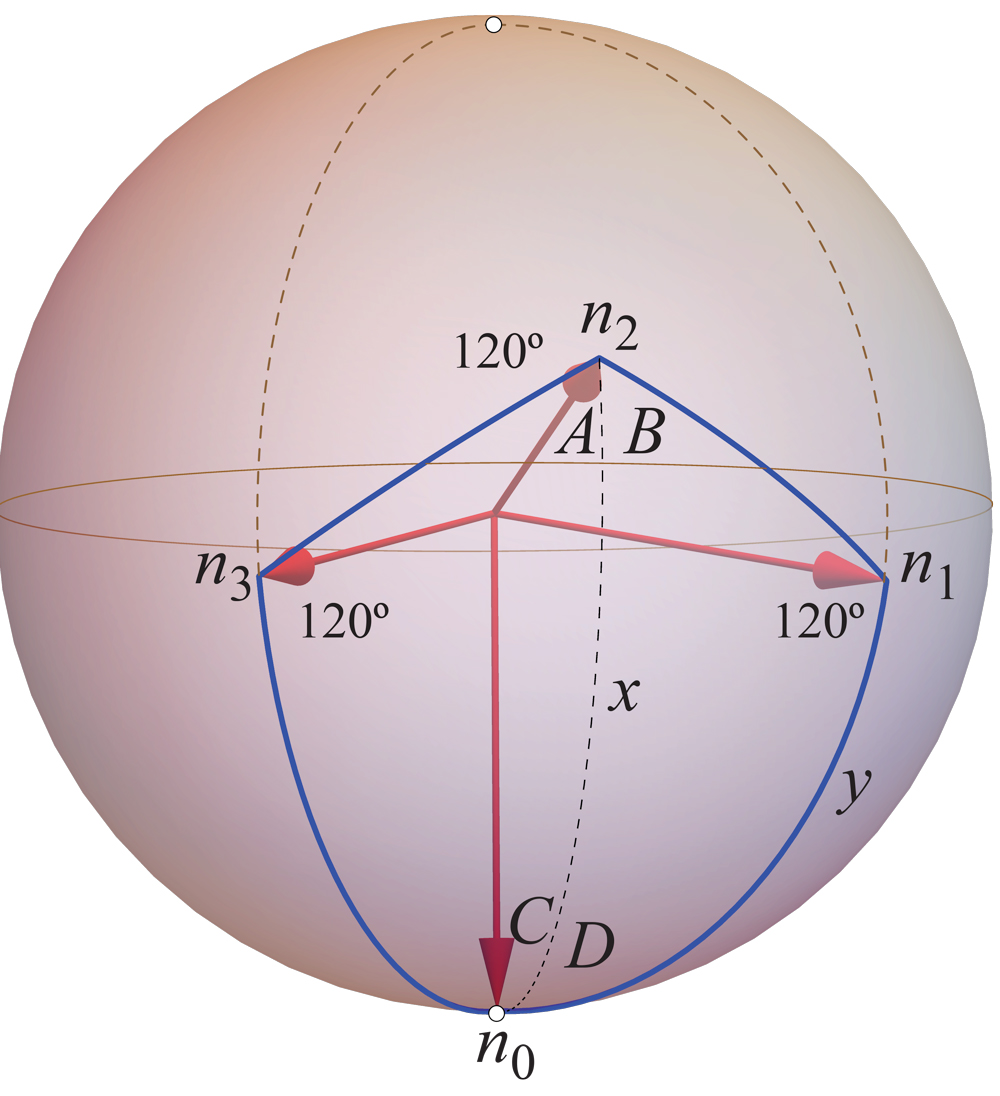}
\caption{The Gauss map for base vertex $b_2$. Here $n_2$ is an upward normal and
both $n_1,n_3$ are downward.
% \Anna{Note clockwise $n_i$ here, but counterclockwise in previous figure.}
%\Anna{Please swap $n_1$ and $n_3$.}
}
\figlab{GSph_Pent}
\end{figure}
%%%%%%%%%%%%%%%%%%%%%%%%%%%%%%%%%Figure End

Since every vertex of $P$ must be incident to at least one dome face with a downward normal, we immediately obtain the following result.
Recall that 
$d$ is the number of dome faces with downward normals that are incident to base edges.

\begin{corollary}
If $P$ is a domeable convex $n$-gon with all angles $\ge 120^\circ$, then $d \ge n/2$.    
\end{corollary}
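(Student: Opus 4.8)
The plan is to convert the purely local conclusion of Lemma~\lemref{Normals} into a global count by double-counting incidences between base vertices and the dome faces along the boundary. First I would fix terminology: in a convex polyhedron each edge lies on exactly two faces, so each of the $n$ base edges lies on $P$ and on exactly one dome face. Call these $n$ dome faces the \emph{edge faces}, one per base edge; the edge face of the base edge $b_i b_{i+1}$ is incident to both of its endpoints $b_i$ and $b_{i+1}$. Note that a single dome triangle cannot be the edge face of both base edges meeting at a vertex $b_i$: its angle at $b_i$ would then have to be the full base angle $\ge 120^\circ$, but every angle of an equilateral triangle is $60^\circ$. Hence the two edge faces at each base vertex are distinct, and each edge face is incident to exactly two distinct base vertices.

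Second, I would match this up with Lemma~\lemref{Normals}. By Lemma~\lemref{3Triangles} each base vertex $b_i$ is incident to exactly three dome triangles, and in the notation of Lemma~\lemref{Normals} the two of these incident to the base edges at $b_i$ are $t_1$ and $t_3$---precisely the two edge faces at $b_i$. Lemma~\lemref{Normals} states that at least one of $t_1, t_3$ has a (strictly) downward normal. Therefore \emph{every} base vertex is incident to at least one edge face with a downward normal.

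Finally, the count. I would sum over all $n$ base vertices the quantity ``number of incident edge faces with downward normal''. Each base vertex contributes at least $1$ by the previous paragraph, so the sum is at least $n$. On the other hand, each edge face with a downward normal is incident to exactly two base vertices, so it is counted exactly twice in this sum; thus the sum equals $2d$. Combining, $2d \ge n$, i.e.\ $d \ge n/2$, as claimed. (Equivalently, the down edge faces form an edge cover of the cycle $C_n$ on the base vertices, whose minimum size is $\lceil n/2\rceil \ge n/2$.)

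I expect the only real subtlety to be the bookkeeping that licenses the double count: one must confirm that ``downward normal'' is a property of a face alone---independent of which of its two endpoints one views it from---so that the same incidence is not miscounted, and that degenerate faces do not spoil the argument. The angle computation above rules out a single triangle carrying both base edges at one vertex, and Lemma~\lemref{Normals} gives a \emph{strict} downward normal for at least one of $t_1, t_3$, so each base vertex is genuinely covered and no vertical or borderline face can undermine the ``at least one'' contribution. With these points pinned down, the inequality $d \ge n/2$ follows immediately.
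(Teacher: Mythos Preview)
Your proposal is correct and follows exactly the approach the paper has in mind: the paper's entire argument is the single sentence ``since every vertex of $P$ must be incident to at least one dome face with a downward normal, we immediately obtain the following result,'' and your double-count of vertex--edge-face incidences is precisely the computation that sentence suppresses.

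One small wording issue: you justify that the two edge faces at a base vertex are distinct by saying ``a single dome \emph{triangle} cannot be the edge face of both base edges \ldots every angle of an equilateral triangle is $60^\circ$.'' In this paper dome \emph{faces} are convex polyiamonds, not single unit triangles, and a polyiamond may well have a $120^\circ$ angle, so this sentence as written does not quite clinch it when $\beta_i=120^\circ$. The claim is nevertheless true: if the same face $F$ were the edge face on both $e_{i-1}$ and $e_i$, then $F$ and the base $P$ would share two distinct edges of the polyhedron, impossible for two facets of a convex polytope (equivalently, $F$ would have to contain all three triangles $t_1,t_2,t_3$ and hence have a $180^\circ$ angle at $b_i$). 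With that fix your double count $2d\ge n$ goes through exactly as you wrote.
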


Going forward, we need the following implication of the above proof.

\begin{obs} 
\label{obs:down-face}
In Lemma~\ref{lem:Normals}, if 
%$f_3$ 
$t_1$ has a downward normal, then it cannot be coplanar with 
$t_2$
%$f_2$ 
(which has an upward normal), and thus the 
dome face containing 
$t_1$ has a face angle of $60^\circ$ at the base vertex $b_i$.
\end{obs}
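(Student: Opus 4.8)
The plan is to read the observation off directly from the normal directions already established in Lemma~\ref{lem:Normals}, using one structural fact about convex polyhedra. First I would recall that on a convex polyhedron every flat (polyiamond) face has a single well-defined outward unit normal, shared by all the unit equilateral triangles composing it; equivalently, two incident triangles are coplanar and belong to a common face precisely when their outward normals coincide. This is the only external ingredient, and it is what converts the \emph{upward/downward} dichotomy of Lemma~\ref{lem:Normals} into a statement about coplanarity.

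Given this, the first assertion is immediate. By hypothesis $t_1$ has a downward normal (negative $z$-component), while Lemma~\ref{lem:Normals} already guarantees that $t_2$ has an upward normal (positive $z$-component). Since the two normals have $z$-components of opposite sign they are distinct, so $t_1$ and $t_2$ cannot be coplanar, i.e.\ cannot lie in a common dome face. For the second assertion I would appeal to the local combinatorics at $b_i$: by Lemma~\ref{lem:3Triangles} the vertex $b_i$ is incident to exactly the three triangles $t_1,t_2,t_3$, and in the cyclic order around $b_i$ the triangle $t_2$ separates the two base-edge triangles $t_1$ and $t_3$. Hence $t_2$ is the only triangle adjacent to $t_1$ at $b_i$, so it is the only candidate that could be merged with $t_1$ into a single face there. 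Having just ruled $t_2$ out, the dome face containing $t_1$ consists, in the neighborhood of $b_i$, of the single triangle $t_1$, and therefore contributes exactly the $60^\circ$ interior angle of an equilateral triangle at $b_i$.

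The argument is short, so I do not expect a serious obstacle; the only point requiring care is the equivalence between ``coplanar within one face'' and ``equal outward normals,'' which is what makes the sign discrepancy of the $z$-components decisive. The secondary thing worth checking is that $t_1$ cannot gain extra face angle by being coplanar with $t_3$ rather than $t_2$, but this is handled by the cyclic-adjacency remark above, since $t_1$ and $t_3$ are non-adjacent at $b_i$ (they are the two base-edge triangles, separated by $t_2$). Thus the face angle at $b_i$ along the downward face is genuinely $60^\circ$ and not some merged larger angle.
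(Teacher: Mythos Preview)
Your argument is correct and matches the paper's intent: the observation is stated there without a separate proof, as an immediate consequence of Lemma~\ref{lem:Normals}, and your write-up simply makes explicit the two natural steps (opposite-sign $z$-components force distinct normals, hence distinct faces; and since $t_2$ is the unique dome triangle adjacent to $t_1$ at $b_i$, the face through $t_1$ contributes only $60^\circ$ there).
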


\begin{lemma}
\label{lem:d-le-6}
If $P$ is a convex $n$-gon with all angles $\ge 120^\circ$, and $\cal D$ is a dome over $P$, then $d \le 6$.
\end{lemma}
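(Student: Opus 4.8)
The plan is to exhibit an injection from the set of $d$ downward-normal base-edge faces into the set of (non-base) dome vertices, and then invoke Lemma~\ref{lem:6dome} to bound the latter by $6$. Two things must be checked: that each such face is incident to at least one dome vertex (so the map is well-defined), and that no two of these faces share a dome vertex (so the map is injective). Well-definedness is immediate: a face with a strictly downward normal is not horizontal, so it meets the base plane $z=0$ in at most a segment; since it contains a base edge, its two endpoints account for that segment, and every remaining vertex of the face has $z>0$ and is therefore a dome vertex. Thus each of the $d$ faces contributes at least one dome vertex, and the whole argument reduces to proving the injectivity (``no sharing'') claim, which is the heart of the lemma.

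For injectivity I would first record the key geometric characterization, generalizing the computation behind Lemma~\ref{lem:Normals}: \emph{if $f$ is a dome face with downward normal incident to a base edge $e$, then every dome vertex of $f$ lies strictly on the exterior side of the supporting line $L_e$ of $e$.} This follows by orienting $e$ horizontally with the interior of $P$ at $y>0$ and checking that the outward normal of the plane through $e$ and an apex $w=(x_w,y_w,z_w)$ with $z_w>0$ has $z$-component proportional to $y_w$; a downward normal forces $y_w<0$, i.e.\ $w$ is outside $L_e$. Now suppose two distinct such faces $f,f'$, incident to base edges $e,e'$, share a dome vertex $w$; then $w$ lies strictly outside both $L_e$ and $L_{e'}$. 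I would split on whether $e,e'$ are adjacent. If they share a base vertex $v$, then both $f$ and $f'$ contain the two points $v$ and $w$; since two distinct faces of a convex polyhedron meet in at most an edge, $f$ and $f'$ must share the edge $vw$ and hence be consecutive around $v$. But Lemma~\ref{lem:Normals} produces at $v$ a \emph{middle} triangle $t_2$ with an \emph{upward} normal lying angularly between the two base-edge triangles, so the face containing $t_2$ separates $f$ from $f'$ around $v$; they cannot be consecutive, a contradiction.

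The remaining, and I expect the hardest, case is when $e$ and $e'$ are non-adjacent, so $f$ and $f'$ meet only at $w$. Here the purely combinatorial ``meet-in-an-edge'' trick no longer applies, and this is where the hypothesis that all base angles are $\ge 120^\circ$ must do the real work: I would argue that a point $w$ lying strictly outside the supporting lines of two non-adjacent edges of a convex polygon, while also being the apex of unit dome triangles attached to both edges (hence close to $\partial P$ and above it), is geometrically impossible once the polygon turns by at most $60^\circ$ per vertex. Concretely, I would bound the location of the doubly-exterior wedge determined by $L_e$ and $L_{e'}$ against the position forced on $w$ by convexity of $\D$ and the unit edge lengths, using Observation~\ref{obs:down-face} to control the face angles at the base vertices between $e$ and $e'$; alternatively one can pass to the vertical projection onto the base plane, where the downward faces tile the overhang region $R\setminus P$ injectively and the shared vertex becomes a planar incidence that the angle condition rules out. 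Once injectivity is established in both cases, the map sends the $d$ faces to distinct dome vertices, and Lemma~\ref{lem:6dome} gives $d\le 6$.
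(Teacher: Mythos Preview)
Your overall strategy---inject the $d$ downward base-edge faces into the set of dome vertices and then apply Lemma~\ref{lem:6dome}---is exactly the paper's. Your handling of the adjacent-edge case is correct and clean. The gap is the non-adjacent case, which you leave as a sketch; the ``doubly-exterior wedge'' determined by $L_e$ and $L_{e'}$ is in general nonempty, so ``outside both supporting lines'' is not by itself a contradiction, and the additional constraints you gesture at (unit edges, convexity of $\D$, the $120^\circ$ hypothesis) are not actually marshalled into an argument.

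The paper closes this gap with a single stroke that makes the case split unnecessary. By Observation~\ref{obs:down-face}, a downward face $f$ on base edge $e$ has face angle exactly $60^\circ$ at each endpoint of $e$. Therefore the orthogonal projection of $f$ to the base plane is contained in the equilateral triangle erected on $e$ on the exterior side of $P$; in particular every dome vertex of $f$ projects into that triangle. One then checks the purely planar fact that these exterior equilateral triangles on the edges of a convex polygon with all angles $\ge 120^\circ$ are pairwise disjoint (at each base vertex the exterior angle is $\le 60^\circ$, so the two adjacent $60^\circ$ wedges fit without overlap; non-adjacent triangles are separated by convexity). Disjointness of the projected triangles immediately gives that no two downward faces share a dome vertex, handling adjacent and non-adjacent edges simultaneously. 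You almost had this---you invoke Observation~\ref{obs:down-face} and even mention the vertical projection---but you used it only to control the half-plane, not the full $60^\circ$ wedge, and that sharper localization is precisely what makes the proof go through without case analysis.
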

\begin{proof}
Consider a downward pointing face $f$ incident to base edge $e$. By Observation~\ref{obs:down-face}, $f$
has $60^\circ$ face angles at the endpoints of $e$, 
so the dome vertices of $f$, when projected to the $xy$-plane, lie in the equilateral $xy$-triangle on edge $e$.
See Fig.~\figref{3Tri_Overhead2D}
and Fig.~\figref{ProjectionPrivate}.
These triangles are disjoint, so 
no two downward pointing faces share a dome vertex.
Since $\D$ has
at most $6$ dome vertices (by Lemma~\ref{lem:6dome}) 
we have $d \le 6$.    
\end{proof}

\begin{figure}[htbp]
\centering
\includegraphics[width=0.5\textwidth]{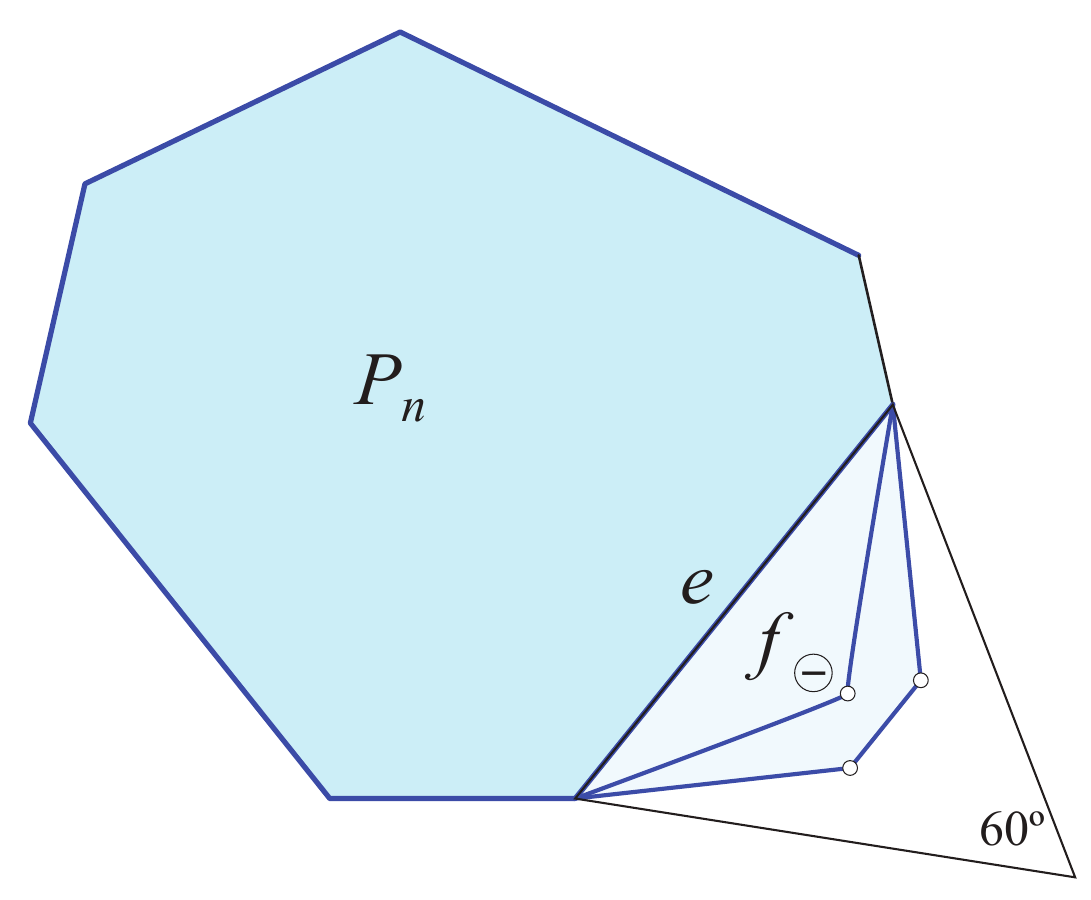}
\caption{Projection of downward face $f$ lies within an equilateral triangle outside base edge $e$. Here $P_n=P_7$.
}
\figlab{ProjectionPrivate}
\end{figure}
%%%%%%%%%%%%%%%%%%%%%%%%%%%%%%%%%Figure End

Finally, we will rule out odd $n > 6$ by proving the following lemma.

%\JOR{Not sure why a Claim rather than a Lemma...?}
%\Anna{just that it's nested inside a lemma.  I don't mind calling it a lemma}
%\JOR{Oh, I see, but still I think better a lemma.}
\begin{lemma}
\label{lem:all-down}
%\Anna{oops, we need $\D$ in the lemma statement}
For any
dome over an equiangular $n$-gon with odd $n \ge 6$, % 
%all the dome faces incident to base edges have downward normals, i.e., 
$d \ge n$.
\end{lemma}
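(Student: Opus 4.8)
The plan is to promote the per-vertex statement of Lemma~\ref{lem:Normals} to a global statement about the cyclic sequence of base-edge faces. Since $n \ge 6$ and $P$ is equiangular, no dome face is incident to two base edges: two distinct base edges of one dome face would be coplanar with it, but a convex polygon has no three collinear vertices and its consecutive edges meet at angle $\pi - 2\pi/n \ne \pi$. Hence each of the $n$ base edges bounds a distinct dome face $F_1,\dots,F_n$. Orienting as before, color $F_i$ \emph{down} or \emph{up} according to the sign of the $z$-component of its normal. Lemma~\ref{lem:Normals} says that at each base vertex at least one of its two incident base-edge faces is down; equivalently, in the cyclic word on $F_1,\dots,F_n$ there are no two consecutive ups. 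This already gives $d \ge \lceil n/2\rceil$, but to reach $d \ge n$ I must rule out \emph{every} up when $n$ is odd.

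First I would pin down the shape of the down faces. By Observation~\ref{obs:down-face} a down face has a $60^\circ$ angle at each of the two base vertices of its edge; being a convex polyiamond whose projection lands inside the unit equilateral triangle erected outward on that edge (as in the proof of Lemma~\ref{lem:d-le-6}), it must be a single unit triangle, with a private apex $v_i$ lying over the exterior of edge $e_i$. Moreover, where two down faces $F_{i-1},F_i$ meet at a base vertex $b_i$, the middle triangle there is the upward unit triangle $v_{i-1}b_i v_i$, so the exterior apexes of a maximal run of consecutive down faces are strung together at unit distance by upward middle triangles. Thus the down part of the dome is organized into chains of exterior apexes, and any up face is a ``break'' between two such chains.

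Next I would pass to the Gauss map, exploiting equiangularity quantitatively. The base normal $n_0$ is common to all base vertices (the south pole), and the spherical quadrilateral of each base vertex has angle exactly $\pi - \beta = 2\pi/n$ at $n_0$ (see Fig.~\figref{GSph_Pent}). Reading the base-edge faces in cyclic order, their normals $N_1,\dots,N_n$ therefore occur at \emph{equally spaced longitudes} about $n_0$, with gap $2\pi/n$, and the upward middle-triangle normals interleave in the gaps; together they bound a simple closed spherical polygon $\Gamma$ winding once around $n_0$. In this picture ``$F_i$ down'' means exactly that $N_i$ lies in the southern (below-equator) hemisphere, so the goal becomes: for odd $n$, every $N_i$ is southern.

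The main obstacle is the final parity step. Assuming some $F_i$ is up, I would take a maximal run of down faces and track, using the fixed per-step rotation $2\pi/n$, a discrete winding of the strung-together exterior-apex chain (equivalently, how the southern arc of $\Gamma$ advances in longitude across the run), then measure the compensating jump contributed by each up break. The plan is to show that each up break reverses this accumulated winding, so that a nonzero number of up breaks can balance around the cycle only when $n$ is even --- precisely the alternating configurations realized by even examples such as the $1,3$-decagon --- whereas for odd $n$ no up break can occur, forcing all $n$ faces down and hence $d \ge n$. Combined with $d \le 6$ (Lemma~\ref{lem:d-le-6}), this yields $n \le 6$, eliminating odd $n \ge 6$. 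I expect the genuinely delicate points to be making the winding count rigorous and handling the borderline or degenerate normals (faces whose normal lies on the equator, and middle triangles coplanar with an edge triangle); this is where the exact value $2\pi/n$ of the equiangular exterior angle, rather than merely $\beta \ge 120^\circ$, must be used.
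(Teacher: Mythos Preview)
Your proposal has a real gap: the parity/winding step is never carried out, and the scaffolding you build toward it contains errors that would have to be repaired first.

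The paper's argument is two lines, resting on Lemma~\ref{lem:dihedrals}. Because every base vertex has the same angle $\beta$ and exactly three $60^\circ$ dome triangles, fixing the dihedral $\delta_i$ at one base edge determines all dihedrals at the two adjacent base vertices; by the reflective symmetry of those two vertices one gets $\delta_{i-1}=\delta_{i+1}$. For odd $n$ this forces all $\delta_i$ equal, so either every base-edge face is down or none is; Lemma~\ref{lem:Normals} gives at least one down, hence all, and $d=n$. You never isolate this rigidity; it is the missing idea.

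Your Gauss-map observation that the $N_i$ sit at longitudes spaced $2\pi/n$ apart is correct but empty for the up/down question: the longitude of $N_i$ is already forced by the direction of $e_i$ in the base plane (the face contains $e_i$, so its normal is orthogonal to $e_i$), independently of the dihedral. Up versus down is governed by the \emph{latitude} $|n_0N_i|=\pi-\delta_i$, and your longitude spacing says nothing about that. The ``each up break reverses the winding'' heuristic is not substantiated, and you acknowledge you cannot make it rigorous.

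There is also a local error in your setup: a down face need not be a single unit triangle. The equilateral triangle in the projection argument of Lemma~\ref{lem:d-le-6} has side length $|e|$, not $1$, so on a base edge of length $\ge 2$ the down face can be a trapezoid with two dome vertices (the paper explicitly treats this possibility in the $n=8,10$ analysis). Your ``chain of exterior apexes at unit distance joined by upward middle triangles'' picture therefore does not hold in general.
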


%Therefore there is no domeable equiangular $n$-gon with odd $n \ge 6$, since we would need $n \le d \le 6$.
%\end{proof}

%\anna{
%\begin{proof}[Proof outline for Lemma~\ref{lem:all-down}.]
%Each base vertex $b_i$ has four incident faces and thus, considered in isolation, has only one degree of freedom for the dihedral angles of incident edges. 
%Let $\d_i$ be the dihedral angle of $\D$ at base edge $e_i = b_i b_{i+1}$.  Because $b_i$ and $b_{i+1}$ share edge $e_i$ and have the same face angles, we can show that $\d_{i-1}=\d_{i+1}$.  
%Since $n$ is odd, this implies
%that all the $\d_i$'s are equal. 
%Lemma~\lemref{Normals} implies that at least one $\d_i$ is $> 90^\circ$, so they all are. 
%\end{proof}
%}
We begin with a preliminary result that will also be useful later on.
We henceforth often abbreviate ``dihedral angles'' by \defn{dihedrals}.
\begin{lemma}
\label{lem:dihedrals}
Let $P$ be an equiangular $n$-gon, $n \ge 6$ ($n$ may be odd or even) with edges $e_1, e_2, \ldots, e_n$.
Let $\D$ be a dome over $P$ and let $\d_i$ be the dihedral angle of $\D$ at base edge $e_i$.
If $n$ is even then the dihedrals $\d_i$ for $i$ even are all equal and the dihedrals $\d_i$ for $i$ odd are all equal.  If $n$ is odd then all dihedrals $\d_i$ are equal.
\end{lemma}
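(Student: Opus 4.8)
The plan is to run the entire argument on the Gauss sphere $\S$, where the only data that enter are the face angles and the dihedral angles; crucially, the \emph{edge lengths} of $P$ never appear, which is exactly why the conclusion holds for every equiangular $n$-gon and not merely the regular one. Orient the base in the $xy$-plane so that the base normal $n_0$ is the south pole. By Lemma~\ref{lem:3Triangles} each base vertex $b_i$ carries exactly three dome triangles, so its Gauss image is a spherical quadrilateral $Q_i$ with vertices $n_0,\nu_{i-1},m_i,\nu_i$ in cyclic order, where $\nu_j$ is the outward normal of the dome face along base edge $e_j$ and $m_i$ is the normal of the middle triangle at $b_i$. Two facts fix all four angles of $Q_i$: each equilateral triangle has a $60^\circ$ angle at $b_i$, so the angles of $Q_i$ at $\nu_{i-1},m_i,\nu_i$ are each $\pi-60^\circ=120^\circ$; and equiangularity gives base angle $\beta=\frac{n-2}{n}\pi$, so the angle of $Q_i$ at $n_0$ equals $\pi-\beta=\tfrac{2\pi}{n}$, the \emph{same} value at every base vertex. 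Finally, by the arc-length rule reviewed above, the side $n_0\nu_j$ has length $\pi-\delta_j$, so the spherical distance from the pole to $\nu_j$ encodes precisely the base dihedral $\delta_j$.

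First I would extract from $Q_i$ a relation between the two neighboring dihedrals $\delta_{i-1}$ and $\delta_i$. Cut $Q_i$ along the diagonal $\nu_{i-1}\nu_i$. The triangle $n_0\,\nu_{i-1}\,\nu_i$ is determined by side-angle-side (angle $\tfrac{2\pi}{n}$ at $n_0$ between the sides $\pi-\delta_{i-1}$ and $\pi-\delta_i$), so its diagonal length and its two base angles are functions of $\delta_{i-1},\delta_i$ alone. Subtracting these base angles from the full $120^\circ$ angles of $Q_i$ leaves the angles of the second triangle $\nu_{i-1}\,m_i\,\nu_i$ at $\nu_{i-1}$ and $\nu_i$; together with the now-known diagonal this triangle is determined, and requiring its remaining angle at $m_i$ to equal $120^\circ$ is a single equation
\[ F\!\left(\delta_{i-1},\delta_i\right)=0, \]
depending only on $n$. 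Because the data defining $Q_i$ are mirror-symmetric (the pole angle $\tfrac{2\pi}{n}$ flanked by two equal $120^\circ$ angles), swapping $\delta_{i-1}\leftrightarrow\delta_i$ preserves the configuration, so $F$ is symmetric: $F(a,b)=F(b,a)$.

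Now suppose (this is the delicate point, addressed below) that for fixed $n$ the equation $F(a,b)=0$ determines $b$ uniquely from $a$ on the admissible range of convex dihedrals, giving a map $g$ with $\delta_{i+1}=g(\delta_i)$. Symmetry of $F$ makes $g$ an involution: from $F(a,g(a))=0$ and symmetry we get $F(g(a),a)=0$, i.e.\ $g(g(a))=a$. Hence $\delta_{i+2}=g(g(\delta_i))=\delta_i$ for every $i$, so the cyclic sequence $\delta_1,\dots,\delta_n$ has period dividing $2$: all odd-indexed dihedrals are equal and all even-indexed dihedrals are equal, which is the even-$n$ claim. When $n$ is odd, closing the cycle gives $\delta_1=g(\delta_n)=g(\delta_1)=\delta_2$ (using $\delta_n=\delta_1$), forcing all $\delta_i$ equal.

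The step I expect to be the main obstacle is the single-valuedness of the relation $F=0$, i.e.\ that a convex dome cannot continue from a given $\delta_{i-1}$ to two different admissible $\delta_i$. I would establish this by a monotonicity argument: as $\delta_i$ increases (the face at $e_i$ tilts up), the diagonal length from the determined triangle, and hence the forced angle at $m_i$, vary monotonically, so at most one value meets the $120^\circ$ condition, with convexity (all dihedrals in $(0,\pi)$, all dome vertices above the base) selecting the correct branch. A secondary issue is the degenerate case flagged in Lemma~\ref{lem:Normals}, where the middle triangle is coplanar with a neighbor and $Q_i$ collapses to a spherical triangle; there the pair $(\delta_{i-1},\delta_i)$ is pinned outright, which is consistent with lying on the same curve $F=0$ and can be folded into the argument by continuity.
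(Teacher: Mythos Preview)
Your proposal is correct and follows essentially the same approach as the paper: the symmetric one-degree-of-freedom relation at each base vertex forces $\delta_{i-1}=\delta_{i+1}$, hence period~$2$, hence the even/odd conclusion. The paper frames this more directly as a kinematic rigidity argument (``three $60^\circ$ triangles at $b_i$ give one degree of freedom; fixing $\delta_i$ fixes all dihedrals at $b_i$ and $b_{i+1}$; by reflective symmetry $\delta_{i-1}=\delta_{i+1}$'') without passing through the Gauss sphere or the explicit involution $g$, and like you it does not spell out the single-valuedness step beyond asserting the one degree of freedom.
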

\begin{proof}
Let the vertices of $P$ be $b_i$, with $e_i = b_i b_{i+1}$. 
Since $P$ is equiangular and $n \ge 6$, the angle $\b$ at $b_i$ is $\ge 120^\circ$. 
By Lemma~\lemref{3Triangles}, there are three dome triangles $t_{i,1}, t_{i,2}, t_{i,3}$ incident to $b_i$, where $t_{i,1}$ is incident to $e_{i-1}$ and $t_{i,3}$ is incident to $e_i$;
see Fig.~\figref{Claim37}.  Each triangle has a $60^\circ$ angle at $b_i$. Together with the base angle of $\beta$, this gives---in isolation---one degree of freedom for the dihedral angles 
of edges incident to $b_i$, ranging from coplanarity of the first two triangles to coplanarity of the last two triangles.

Because there is one degree of freedom at $b_i$, 
fixing $\d_i$ then fixes the
dihedral angles of all edges 
incident to $b_i$ and to $b_{i+1}$. Furthermore, by reflective symmetry of $b_i$ and $b_{i+1}$ (which have the same face angles), 
we have $\d_{i-1}=\d_{i+1}$.
This implies the result.
%Thus even dihedrals are equal, odd dihedrals are equal, and for $n$ odd, all dihedrals are equal.
\end{proof}

\begin{proof} [Proof of Lemma~\ref{lem:all-down}]
By Lemma~\ref{lem:dihedrals}, all the dihedrals of $\D$ at base edges of $P$ are equal. 
By Lemma~\lemref{Normals}, at least one dome face incident to a base edge has a downward normal (equivalently, a dihedral angle $> 90^\circ$). Thus all the dome faces incident to base edges must
have downward normals.
\end{proof}

This completes the proof of Lemma~\ref{lem:12-and-6}.

%see Fig.~\figref{Claim37}.
%%%%%%%%%%%%%%%%%%%%%%%%%%%%%%%%%Figure Begin
\begin{figure}[htbp]
\centering
\includegraphics[width=0.60\textwidth]{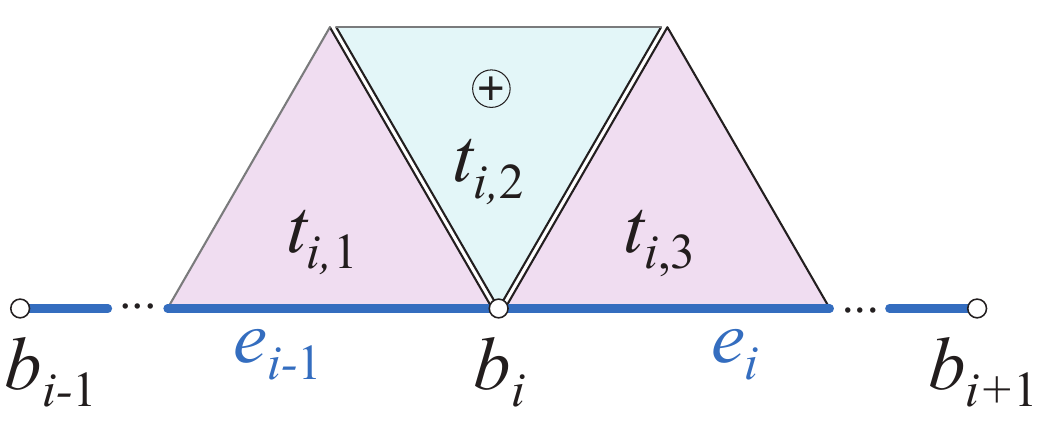}
\caption{Dome triangles incident to base vertex $b_i$.
}
\figlab{Claim37}
\end{figure}
%%%%%%%%%%%%%%%%%%%%%%%%%%%%%%%%%Figure End

\section{Proof of Theorem~\thmref{EquiAng}(b): Restrictions on edge lengths}
\label{sec:main-part-b}

We now turn to constraints on the edge lengths of equiangular polygons that can be domed,
part~(b) of our main theorem.
Of course all edge lengths must be 
integers
because the dome is composed of unit equilateral triangles.
For $n \le 6$, the constraints are relatively easy to identify. 
For $n=8,10,12$, significant argument is needed to establish if-and-only-if characterizations.
We first examine the edge length conditions in more detail.

\subsection{List of Conditions}
\seclab{ListConditions}
Recall the conditions stated in the main theorem:
for $n=3,4,5,6$ every equiangular integer polygon can be domed, and for $n=8,10,12$ an equiangular integer $n$-gon can be domed iff the lengths of the odd-numbered edges are equal and the even-numbered edge lengths are those of an equiangular $\frac{n}{2}$-gon.

Combining this with the known  characterizations of equiangular integer $n$-gons for $n=3,4,5,6$~\cite{ball2002equiangular}, 
we obtain the following:
\begin{description}
\item[$n=3$:] All side lengths equal.
\item[$n=4$:] Side lengths $a,b,a,b$, i.e., all 
integer rectangles. See Lemma~\lemref{Rect}.
\item[$n=5$:] All side lengths equal. This follows because every  
integer equiangular pentagon is regular~\cite{ball2002equiangular}.
%Here is an earlier paper that proves that an equiangular pentagon with integer side lengths is regular.
%Ball, Derek. “Equiangular polygons.” The Mathematical Gazette 86.507 (2002): 396-407.
\item[$n=6$:] Side lengths are
those of a polyiamond $6$-gon, namely, $a,b,c,a',b',c'$ with $a-a' = b'-b = c-c'$:
Fig.~\figref{Polyiamond6}.
%This is a polyiamond.
\item[$n=8$:] Odd edges have equal length $a$.
Even edges have lengths $b,c,b,c$. 
%$a+1 \le b,c$. 
% \Anna{I don't think we need $a+1 \le b,c$---see Coauthor ``An extra edge-length constraint?'' where we debated this.}
%\JOR{I looked at coauthor and thought through the $a+1$ issue, and I am now on board.
%I think I was mixing the base edge lengths with the
%edge lengths one story up.}
\item[$n=10$:] Odd edges have equal length $a$, even edges have equal length $b$. 
%$a+1 \le b,c$.
For example, see the earlier Fig.~\figref{IcosaSlicesDifferent}(b).
\item[$n=12$:] Odd edges have equal length $d$. 
% \Anna{Is it easy to switch to: odd edges have length $a$, to be consistent with the other cases? If this means lots of changes, let's not!}
% \JOR{I think I'd rather not, because $a,b,c$ is natural
% for those eqns and Fig.~\figref{Polyiamond6}.}
%
% \Anna{The following condition goes away: $d+1 \le a,b,c,a',b',c'$.}
Even edges are those of an equiangular $6$-gon, i.e., lengths are $a,b,c,a',b',c'$ with $a-a' = b'-b = c-c'$.
\end{description}

We aim to prove that an equiangular $n$-gon can be domed if and only if
the above constraints are satisfied.
The cases $n=3,4,5$ are already settled.

\subsubsection{$n=6$}
\seclab{n=6}

We prove the following more general result:

% \Anna{I've removed some stuff here and instead added to the proof.}
%\Anna{The point is that any polyiamond is made by truncating $0,1,2$ or $3$ vertices off an equilateral triangle.}
\hide{For $n=6$, the side-length constraints imply that $P_6$ is a polyiamond,
a truncation of the three corners of an equilateral triangle: Fig.~\figref{Polyiamond6}.
\begin{align*}
c' + a + b  = \; & b + c + a' \\
a -a'  = \; &  c - c' \\
a' + b' + c'  =\;  & b + c + a' \\
b' - b  =  \;& c - c'
\end{align*}
%see Fig.~\figref{Polyiamond6}.
} % end hide

\begin{lemma}
\lemlab{Polyiamond}
Every convex polyiamond can be domed.
\end{lemma}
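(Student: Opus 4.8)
The plan is to prove that every polyiamond can be domed by exhibiting an explicit dome and arguing it is convex. A polyiamond $P$ is a convex polygon whose faces tile with unit equilateral triangles; its vertices lie on the triangular lattice, and (being convex) it is a hexagon or degenerate hexagon obtained by truncating corners of a large equilateral triangle, with edge directions at multiples of $60^\circ$. The natural construction is the \emph{flat dome}: take $P$ itself, tile it with unit equilateral triangles, and push that triangulation infinitesimally (or exactly, into a degenerate but convex polyhedron). More usefully, I would realize $P$ as a planar \emph{face} of a genuinely three-dimensional convex polyhedron by lifting the base to a shallow convex cap. Concretely, my first step is to observe that $P$ sits inside some large equilateral triangle $T$ in the lattice, and that a large triangular portion of the icosahedral/tetrahedral triangulation, or a ``tent'' over the lattice, provides a convex surface of unit triangles spanning $\partial P$.

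The cleanest approach I would actually carry out: since $P$ is a polyiamond, triangulate $P$ into unit equilateral triangles and note this triangulation is a valid (flat) polyiamond decomposition. Then I would build the dome by folding this flat triangulation upward along a chosen set of lattice edges so that it becomes strictly convex while keeping $\partial P$ fixed in the base plane. Rather than fold arbitrarily, I would appeal to the regular cases already established: each convex polyiamond $P$ has between $3$ and $6$ vertices, with interior angles in $\{120^\circ, 180^\circ\}$ at lattice points along the boundary (the straight $180^\circ$ points being lattice vertices on an edge). So the first key step is to enumerate the combinatorial shape of $P$ (a triangle, trapezoid, parallelogram, pentagon, or hexagon cut from the triangular lattice), and the second key step is to give, for each shape, a ``roof'' or ``tent'' dome, generalizing the rectangle construction of Lemma~\lemref{Rect} and the antiprism/slice constructions of Section~\secref{RegPoly}.

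A uniform construction I would prefer, avoiding case analysis, is to lift $P$ onto the boundary of a convex polytope. Place $P$ horizontally and lift each lattice point $v$ of the triangulation to height $h(v) = c - \varepsilon\, q(v)$ for a suitable strictly concave function $q$ (for instance $q(v)=|v|^2$) with $\varepsilon$ small, keeping boundary points at height $0$. The lifted unit triangles form a convex cap iff the induced dihedral angles are reflex from below, which holds for small $\varepsilon$ by the standard Lagrangian/regular-subdivision argument; the only issue is that the lifted triangles are no longer exactly \emph{equilateral}. To fix this, I would instead use that the triangular lattice admits exact isometric convex foldings: each interior lattice edge can be creased so that the three triangles around an interior vertex acquire positive curvature, i.e., fewer than six triangles meet. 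The main obstacle, and the step deserving the most care, is precisely this \emph{exactness}: producing a strictly convex embedding in which every triangle remains a congruent unit equilateral triangle, rather than merely a convex cap of distorted triangles. I expect to resolve it by reducing to the already-constructed regular-polygon domes: a general polyiamond hexagon with edges $a,b,c,a',b',c'$ satisfying $a-a'=b'-b=c-c'$ can be assembled from the regular hexagonal antiprism dome by extending it with strips of trapezoid-and-triangle ``roof'' pieces (as in Lemma~\lemref{Rect}) along the longer edges, each strip preserving convexity because it only adds edges whose dihedral angles stay below $180^\circ$. Verifying that these strips glue consistently at the truncated corners, and that the global surface stays convex, is the crux; once that local-to-global convexity check is done, every polyiamond is domed.
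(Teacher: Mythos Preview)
Your proposal does not contain a completed proof. You cycle through several constructions, correctly diagnose why the first two fail (the lifted convex cap distorts the triangles; the ``exact isometric folding'' is left vague), and settle on a plan to extend the hexagonal-antiprism dome by gluing roof-style strips along the edges---but you explicitly label the key step, that the strips glue consistently and convexly at the corners, as ``the crux'' and do not carry it out. That step is genuinely problematic: the side triangles of the hexagonal antiprism sit at a specific dihedral to the base and meet at specific spatial positions, and inserting a strip along one base edge translates one portion of the dome relative to the rest. There is no evident degree of freedom to absorb the resulting mismatch at the adjacent corners while keeping every triangle unit and equilateral; the hexagonal antiprism is rigid, not a one-parameter family you can stretch edge by edge.

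The paper's argument is a one-line construction you missed entirely. Every convex polyiamond is obtained from a large integer equilateral triangle $T$ by slicing off up to three small integer equilateral triangles at the corners (this is your own observation in the first paragraph). Erect the regular tetrahedron on $T$ and slice off the corresponding small regular tetrahedra at those base corners. The resulting truncated tetrahedron has the given polyiamond as its base; each original side face becomes a trapezoidal or hexagonal polyiamond, and each cut introduces a small equilateral-triangle face. Convexity is inherited from the tetrahedron since you are only intersecting with half-spaces. No gluing, no strips, no convexity check beyond the obvious.
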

\begin{proof}
A polyiamond on $3,4,5,6$ sides is formed by cutting off $0,1,2,3$, respectively, corners (small equilateral triangles) off a large integer equilateral triangle $T$; see Fig.~\figref{Polyiamond6}.
%\Anna{This does all $n$ at once:}
%\JOR{Nice!}
Construct a regular tetrahedron with triangle $T$ as one face.
Fig.~\figref{fig-polyiamonds} shows the case of a 6-sided polyiamond.
Extend the up-to-three small cut-off triangles to regular tetrahedra, shaded gray in the figure.  Cutting the gray tetrahedra off the large tetrahedron results in a truncated tetrahedon 
with the initial (pink) polyiamond as one face.
All the other faces are integer polyiamonds as well.
\end{proof}

%%%%%%%%%%%%%%%%%%%%%%%%%%%%%%%%%Figure Begin
\begin{figure}[htbp]
\centering
\includegraphics[width=0.4\textwidth]{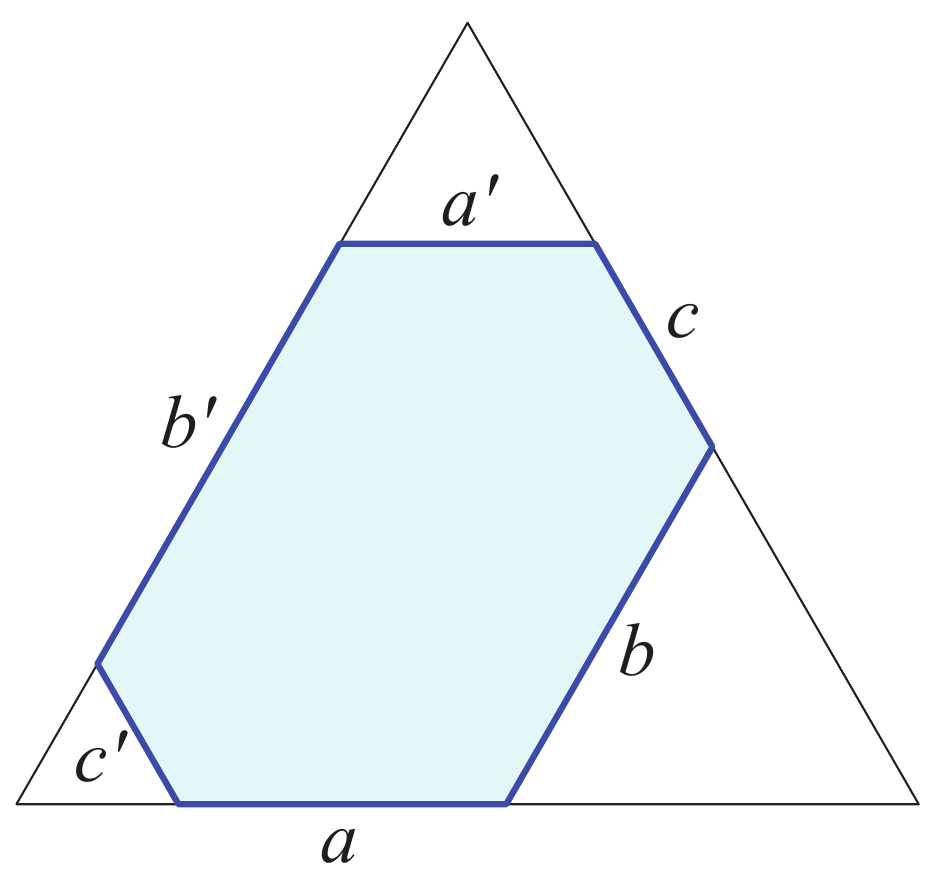}
\caption{A polyiamond in an integer equilateral triangle
of side length $a+b+c'$ (here any of $a', b, c'$ may be $0$). 
%$a -a' =  b' - b = c - c'$.
}
\figlab{Polyiamond6}
\end{figure}
%%%%%%%%%%%%%%%%%%%%%%%%%%%%%%%%%Figure End

%%%%%%%%%%%%%%%%%%%%%%%%%%%%%%%%%Figure Begin
\begin{figure}[htbp]
\centering
\includegraphics[width=0.4\textwidth]{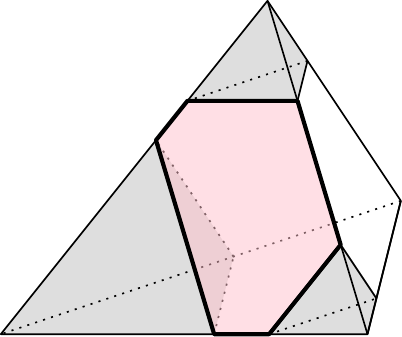}
\caption{The (pink) polyiamond is domed by the truncation of three corner tetrahedra.}
\figlab{fig-polyiamonds}
\end{figure}
%%%%%%%%%%%%%%%%%%%%%%%%%%%%%%%%%Figure End

\hide{
%\Anna{The proof should be fixed to handle $n=3,4,5$ polyiamonds too.}
%\JOR{Pepa's proof:
%``given a pink polyiamond, extend every other side to make an equilateral triangle, extend that triangle to a tetrahedron, and cut off 3 smaller tetrahedra at its corners to produce a solid that has the pink polyiamond as its face.''}
%\Anna{The following still needs fixing.}
We start with the case $n=6$
\anna{where the polyiamond $P_6$ is formed by cutting three corners off a large equilateral triangle.
Take the large triangle to be one face of a regular tetrahedron. 
See Fig.~\figref{fig-polyiamonds}.
Extend the three small cut-off triangles to regular tetrahedra, shaded gray in the figure.  Cutting the gray tetrahedra off the large tetrahedron results in a truncated tetrahedon 
with the (pink) polyiamond as one face.
All the other faces are integer polyiamonds as well.
}
%Draw the polyiamond $P_6$ on one face of a regular tetrahedron so that every other edge
%(even edges) shares boundary with that face.
%See Fig.~\figref{fig-polyiamonds}.
% Anna: Pepa's idea of "extending" the even sides was already accomplished by putting $P_6$ on one face of a tetrahedron.  So I've reworded a bit. 
%Extend the even edges to equilateral triangles in that face, and extend those equilateral triangles to regular tetrahedra,
%shaded gray in the figure.
%Cut off these three tetrahedra.
%This leaves the (pink) polyiamond a face of the truncated polyhedron. % $\P$.
%$\P$ only used once, so now removed.
%All the other faces of 
%$\P$ 
%the polyhedron
%are polyiamonds, and so composed of equilateral triangles.
Therefore, we have domed $P_6$.

\jor{To dome $P_n$ for $n=3,4,5$, 
follow the $P_6$ construction,
except do not remove (as appropriate)
either %$3$, $2$, or $1$
three, two, or one
of the gray tetrahedra in Fig.~\figref{fig-polyiamonds}.
Again the remaining faces are polyiamonds, and the
conclusion follows.}
\end{proof}
%see Fig.~\figref{fig-polyiamonds}.
} % end hide

\subsection{Conditions $\to$ domeable} %, for $n=8,10,12$}
For $n=8,10,12$, we first show that the edge-length conditions listed above
lead to domings of each $P_n$---the ``if'' direction of the claim.
The construction begins the same way in all three cases.
%All three are similar in the following sense.
Suppose the odd base edges have length $\ell$.  Erect an equilateral triangle $t$ with side length $\ell$ on each odd base edge.
% Anna: an $\ell times \ell$ triangle doesn't make sense.
%
%We erect an $\ell \times \ell$
%equilateral triangle $t$ over
%each odd base edge, which are all of length $\ell$.
This triangle has a downward normal.
Two consecutive triangles are joined by
%Join to either side of $t$ 
an upward facing trapezoid.
Then at the height of the apex of $t$,
we have a planar polygon on $n/2$ vertices---a ``lid.''
% an $n/2$-gon ``roof.''
% \Anna{Hmm. Isn't the ``roof'' the dome (or specifically the dome of a rectangle) rather than the polygon?  Maybe ``planar polygon on $n/2$ vertices.''}
For an octagon, 
the %roof 
lid is a rectangle, which we then dome.
For a decagon,
the %roof 
lid is a regular pentagon, which we then dome.
For a dodecagon,
the %roof 
lid is a polyiamond hexagon, which completes the dome. 
Below we give further details and show that the resulting polyhedron is convex in each case.

\subsubsection{$8$-gons $\to$ domeable}
We dome any octagon satisfying the edge-length constraints as illustrated
in Fig.~\figref{Octa_4x2}. The four downward pointing faces are tilted at
dihedral angle the same
as the corresponding faces of a square antiprism 
(the case that arises from a regular octagon).
One level of triangles results in a rectangular \anna{lid},
%boundary, 
which we then ``roof'' as 
seen in Figs.~\figref{RectInt} and~\figref{Rect_3x1_roof}.
%\Anna{We should say why the result is convex.}
% Anna: Great!  I didn't know this.
Combining the dihedral angles of a square antiprism
($\approx 104^\circ$) and a square pyramid
($\approx 55^\circ$)
leads to a dihedral angle where the roof meets a
side of $\approx 159^\circ < 180^\circ$ and so is convex.
%https://polytope.miraheze.org/wiki/Square_antiprism
%https://polytope.miraheze.org/wiki/Square_pyramid

%see Fig.~\figref{Octa_4x2}.
%%%%%%%%%%%%%%%%%%%%%%%%%%%%%%%%%Figure Begin
\begin{figure}[htbp]
\centering
\includegraphics[width=0.6\textwidth]{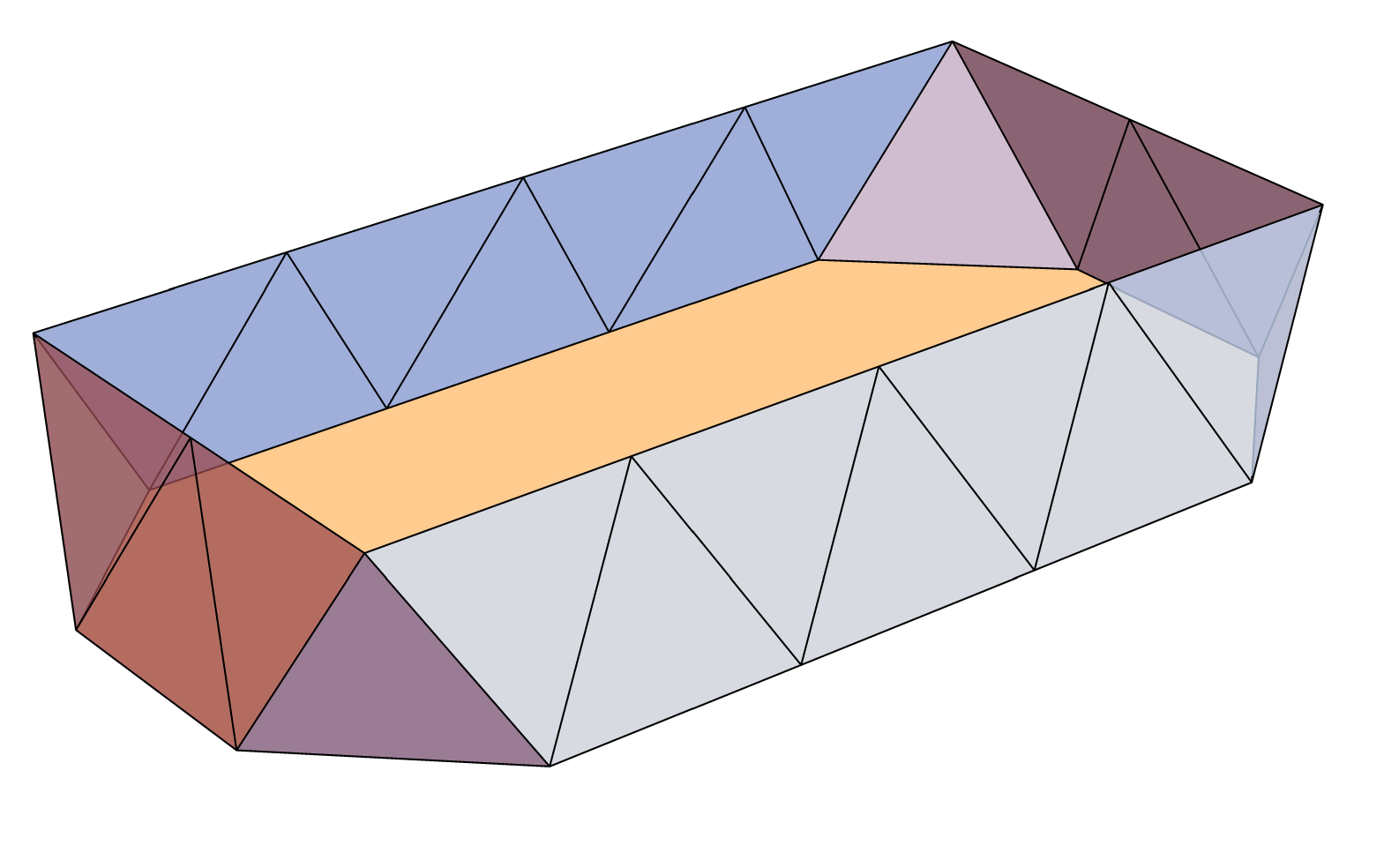}
\caption{An octagon base with edge lengths $1,4,1,2,1,4,1,2$.
The 
%one-storey height 
``lid'' is a $4 \times 2$ rectangle.}
\figlab{Octa_4x2}
\end{figure}
%%%%%%%%%%%%%%%%%%%%%%%%%%%%%%%%%Figure End

\subsubsection{$10$-gons $\to$ domeable}
For certain values of $a,b$,
the length pattern $a,b,\ldots,a,b$ can be achieved by adjusting the height of the
slice through an icosahedron; see
Figs.~\figref{SliceFigs}(c) and~\figref{IcosaSlicesDifferent}(b).
But the more general strategy was described above:
% Anna: we don't need a \le b
%If $a \le b$, 
At the height of the $a$-triangle's apex,
the lid is
%we have 
a regular pentagon with side length $b$.
This can then be domed 
with a pentagonal pyramid
as in Fig.~\figref{Pyramids345}.
%\Anna{We should say why the result is convex.}
Combining the dihedral angles of a pentagonal antiprism
($\approx 101^\circ$) and a pentagonal pyramid
($\approx 37^\circ$)
leads to a dihedral angle where the top 
meets a
side of $\approx 138^\circ < 180^\circ$ and so is convex.
%https://polytope.miraheze.org/wiki/Pentagonal_antiprism
%https://polytope.miraheze.org/wiki/Pentagonal_pyramid

\subsubsection{$12$-gons $\to$ domeable}
The odd edges of equal length $d$ are covered by downward facing equilateral $d$-triangles.
See Fig.~\figref{PepaConstruction}.
One level up, 
%leaves 
the lid is
%the even edges 
%form
a polyiamond $6$-gon, 
which can be filled in to create
%creating 
the last face of the dome.
%which
%follows the patterns previously shown in Fig.~\figref{Polyiamond6}.
%See Fig.~\figref{PepaConstruction}.
%%%%%%%%%%%%%%%%%%%%%%%%%%%%%%%%%Figure Begin
\begin{figure}[htbp]
\centering
\includegraphics[width=0.5\textwidth]{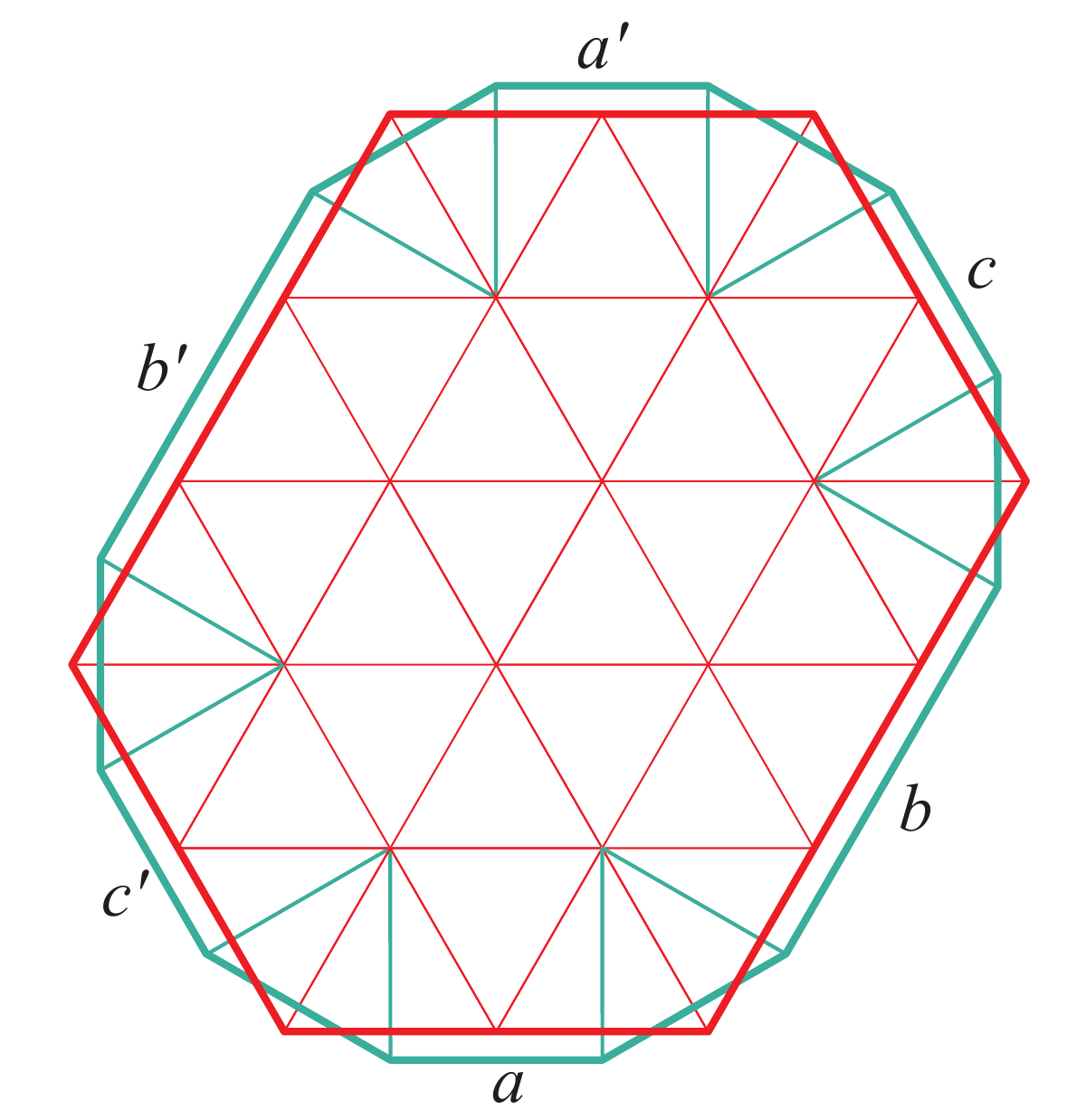}
\caption{Equiangular $12$-gon.
Red hexagon is polyiamond roof.
Green polygon is base $P_{12}$, angle $\b = 150^\circ$.
$a,b,c=1,2,1$; $d=1$.}
\figlab{PepaConstruction}
\end{figure}
%%%%%%%%%%%%%%%%%%%%%%%%%%%%%%%%%Figure End

%\clearpage
\subsection{Domeable $\to$ conditions}

In this section we prove
that any domeable equiangular integer $n$-gon, $n=8,10,12$, satisfies the edge length conditions. We know from Lemma~\ref{lem:6dome} that
a dome $\D$ over such an $n$-gon has
%if $P$ can be domed then it has a dome $\D$ with %no $V_3$ vertex and thus with 
at most 6 dome vertices. We will prove properties of 
the 
%such a dome $\D$ 
dome to derive the edge length conditions.

%\Anna{We need to explain that the argument is: if $P$ can be domed then it has a dome $\D$ with no $V_3$ vertex, and we prove properties of THAT dome $\D$ to get the edge length conditions on $P$.}
%\JOR{I wish I had a collection of examples that have $V_3$
%vertices. There seem to be only a few? E.g., square base, two $V_3$
%vertices, whose removal leads to a square pyramid.}
%\Anna{Simplest is a tetrahedron with an extra tetrahedon glued to one face.  Or what was the one you asked me about the other day?}
%\Anna{We do not at all characterize ALL domes -- that's an open question for later.  Only all equiangular bases.}

\subsubsection{$12$-gons $\to$ conditions}
We first handle the $n=12$ case, as it is simpler than $n=8,10$.
We will repeat some of the logic when addressing $n=8,10$ below.
We present the argument in five steps.  Refer throughout to Fig.~\figref{n12Proof}.

%\Anna{I notice that starting here, we have reversed odd and even base edges. In the main theorem and everthing up to this point, the odd edge lengths are equal (and they have the downward faces).  }
%\Anna{OK, I think it's all fixed.}
\begin{enumerate}[(1)]
\item
As we saw in Lemma~\ref{lem:dihedrals},
the dihedrals at even base edges are equal, and the dihedrals at odd base edges are equal.  By Lemma~\ref{lem:Normals}, one set, say the 
odd
%even 
edges, 
has dihedrals $> 90^\circ$, i.e., the corresponding dome faces have downward normals.
%\Anna{to be precise it's not the dihedrals that point downward}
%As we saw in \jor{Lemmas~\ref{lem:dihedrals}
%and~\ref{lem:all-down},}
%the dihedrals for even edges are equal, and they have downward normals 
%and the dihedrals for odd edges are equal.
%See Fig.~\figref{n12Proof}.
%%%%%%%%%%%%%%%%%%%%%%%%%%%%%%%%%Figure Begin
\begin{figure}[htbp]
\centering
\includegraphics[width=0.65\textwidth]{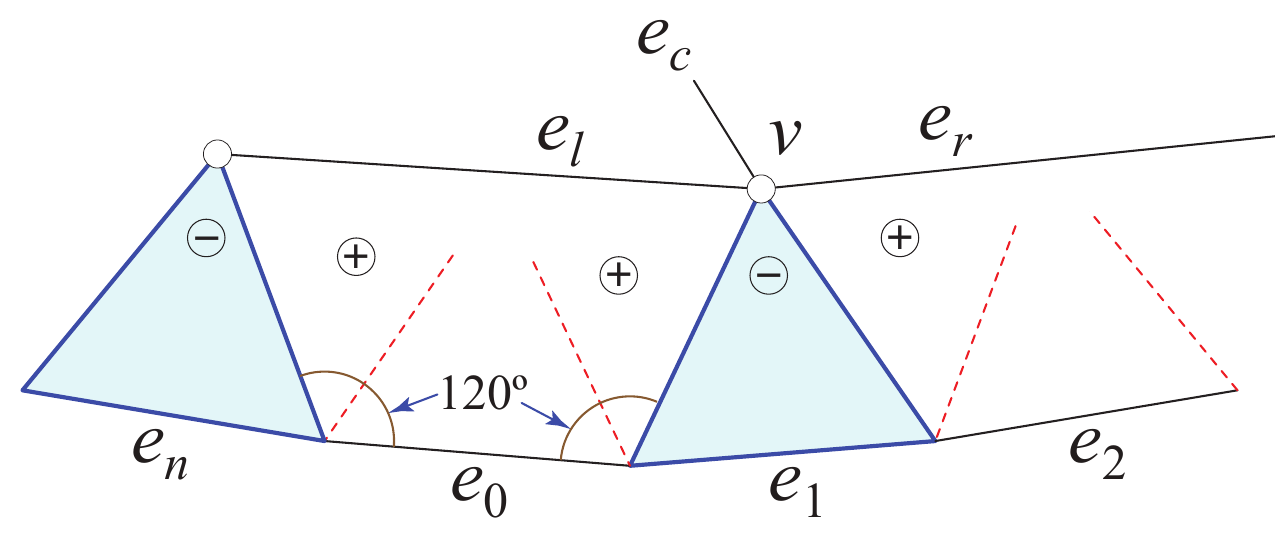}
\caption{Faces on odd
%even 
edges are triangles pointing down.
Faces on even %odd 
edges are quadrilaterals.
% \Anna{the $120^\circ$ is for the WHOLE angle (the union of two $60^\circ$ angles).  Could you please add little circular arcs to indicate this? The arrow makes it seem like the wrong thing.}}
}
\figlab{n12Proof}
\end{figure}
%%%%%%%%%%%%%%%%%%%%%%%%%%%%%%%%%Figure End
%
\item The projection argument in %Section~\secref{NormalsPrivate}
Lemma~\ref{lem:d-le-6}
shows that 
the $6$ faces incident to odd base edges have disjoint sets of dome vertices.
%each face on an odd 
%edge needs at least one %private vertex.
Since 
our dome $\D$ has
at most $6$ dome vertices, 
%by Lemma~\ref{lem:6dome},
each of these faces gets one 
%private vertex, 
dome vertex, and so it must be a triangle.
Also, each dome vertex is the apex of one of these triangles.
%\Anna{This is a stronger claim than what we originally wrote: we need that ANY dome vertex of a face on an odd edge must be private.}
%
\item Each base vertex \anna{$b$} has three incident dome triangles.  One of them is in the downward-normal face at the vertex.  The edge incident to $b$ that lies between the other two dome triangles will be called a ``red'' edge (dashed in Fig.~\figref{n12Proof}).
Next we prove that the 12 red edges 
%in Fig.~\figref{n12Proof} 
are flat, i.e., have a $180^\circ$ dihedral.
%\Anna{I put the 12 up front since it's independent of nonflat.}
By symmetry
of the base vertices, all the red edges have the same dihedral, so if one is nonflat, then they all are.  
%Then we have $12$ red edges.  
What are the 
dome vertices at the other ends of these nonflat red edges? (Note that a red edge cannot go to another base vertex.)
%\Anna{just hoping to clarify that we follow the edge past flat dome vertices}
%other endpoints of these red edges?  
%They cannot be base vertices, so they must be dome vertices. 
Since there are $12$ red edges and only $6$ dome vertices,
some dome vertex $u$ is an endpoint of at least two red edges. 
%Because $u$ is a private vertex,
As noted above, $u$ is the apex of a downward-normal triangle, so
it 
also has two (different) edges to the endpoints of an odd
%even 
base edge. Because $u$ has at most $5$ incident dome triangles, 
it has at most one more incident dome edge
beyond the two red edges and two edges to the base.
%\JOR{Reviewer~4 thinks these sentences are confusing. Repaired now.}
This means that two of $u$'s edges separated by $60^\circ$
(thus in one face)
are incident to non-consecutive base vertices. 
But it is impossible for two non-consecutive base vertices to lie in the same dome face.
%But this is impossible,
%as it would force collinearities in base edges. 
Thus the red edges cannot end at dome vertices.
Thus they must be flat edges.
The face on any 
even %odd 
base edge then has $120^\circ$ face angles at the base vertices.  We next argue that each such face is a quadrilateral.
\item Consider now a dome vertex $v$, the apex of a triangle face on an odd
%even 
edge, say $e_1$ in the figure.
Two of its (at most) five incident edges go to the base endpoints of 
$e_1$. 
%$e_2$.
And the two 
incident edges to the
left and right, $e_l$ and $e_r$,
(whether flat or not) must be horizontal and parallel to the corresponding base edge
$e_0$ and $e_2$.
%($e_1$ and $e_3$). \Anna{Fix!}
The angle between $e_l$ and $e_r$ is then $120^\circ$.
Therefore $e_c$, the central edge between them, must be flat, and $e_l$ and $e_r$ are not flat.
\item 
The face on any even
%odd 
base edge $e_i$ is then a trapezoid %quadrilateral 
with $120^\circ$ angles at the base and $60^\circ$ angles at the dome vertices.   
Then the dome vertices must all be at the same level, 
so the triangles on odd base edges all have the same edge length, say $\ell$. 
The length of the top edge of the trapezoid on $e_i$ is $|e_i| + \ell$.
These top edges 
joining the dome vertices then form an equiangular integer hexagon, a polyiamond, whose edge lengths must then be $a,b,c,a',b',c'$ with $a-a' = b'-b=c-c'$.  
%These top edges 
%form an equiangular hexagon joining the dome vertices, whose
%edge lengths satisfy the constraint  stated in the 
%claim above.  
Thus the 
even  
base edge lengths also satisfy that constraint, which proves the result for $n=12$.
\end{enumerate}

\subsubsection{$8,10$-gons $\to$ conditions}
\seclab{n810Proof}
Compared to the case $n=12$ there are some hurdles to overcome. 
One is that a 
downward-normal face on an odd  edge 
might
not be a triangle---it %may 
could
be a quadrilateral with two
dome vertices.
%private vertices.  
Another is that there may be 
dome vertices that are not included in any of the faces on the odd edges,
%non-private dome vertices, 
at most one for $n=10$ and at most two for $n=8$.  
Despite these difficulties, we will still 
follow the same outline as above, first proving
that the red edges in Fig.~\figref{n12Proof} are flat, and then analyzing the situation at one dome vertex.

\hide{These two cases are more difficult than $n=12$, because we can no longer argue (in step (2) above)
\Anna{Should we say: that the faces on even edges are triangles?  I mean, aren't we stuck even if all dome vertices are private?}
\JOR{Not sure how to respond to this. The original seems
to me to be true, and reads well.
This is just an introduction, so we don't need
absolute precision. As long as the steps to follow
are correct.}
% Anna: the original reads better!
%\jor{we no longer can}
% JOR: OK, I was trying to avoid splitting the verb,
% but sound is important.
%conclude 
that every dome vertex is private,
which was crucial for proving that red edges are flat.
For $n=10$ we may have one non-private dome vertex.  For $n=8$ we could have two non-private dome vertices.
} % end hide

\begin{enumerate}[(1)]
\item
As we saw in Lemma~\ref{lem:dihedrals},
the even %odd 
base edges have equal dihedrals, and the odd %even 
base edges have equal dihedrals and are incident to faces with downward normals.
%
%\item 
These downward faces on the odd %even 
edges, which we call %``blue''
\emph{blue}
faces, have $60^\circ$ angles at the base
by Observation~\ref{obs:down-face}. 
Thus each blue face is
either a triangle
with one 
dome
%private 
vertex, 
or a trapezoid with two 
dome
%private 
vertices (again, following the projection argument in 
Lemma~\ref{lem:d-le-6}).
%Section~\secref{NormalsPrivate}).
See Fig.~\figref{RedFaces_FigB}.
%\anna{We call the dome vertices of blue faces the \emph{blue} vertices.}
For $n=12,10,8$, we need at least $n/2=6,5,4$
of the $6$ dome vertices to lie in blue faces.
%private vertices
%out of the $6$ dome vertices.
For $n=12$, 
this means that the 
blue
%downward 
faces are all triangles.
For $n=10,8$, there could be one or two
blue faces that are trapezoids.
\item 
%Using this observation, 
As for the $n=12$ case above, we identify a ``red'' edge at each base vertex $b$---the edge between the two dome triangles at $b$ that are not in the blue face. As before, we will argue that all red edges are flat. 
%The next step is to prove that all the red edges
%in Fig.~\figref{n12Proof} are flat. 
Suppose not.  As above, by symmetry, this implies that all the red edges are nonflat.
Here is where matters are more complicated
than in the $n=12$ case.
Consider the faces incident to the even %odd 
base edges. 
Because the red edges are nonflat, each such face is bounded by red edges at the base and we call it a \emph{red} face.  A red face has  
$60^\circ$ degree face angles at the base, so it must be a triangle with one dome vertex, or a trapezoid with two dome vertices.
% \Anna{I omitted the definition of red vertices.}
%We call the dome vertices of red faces the \emph{red} vertices.
%\anna{We have not yet ruled out a vertex being both blue and red.}
See Fig.~\figref{RedFaces_FigB}.
Note that there is an upward-normal face between consecutive red and blue faces.

%see Fig.~\figref{RedFaces_FigB}.
%%%%%%%%%%%%%%%%%%%%%%%%%%%%%%%%%Figure Begin
\begin{figure}[htbp]
\centering
\includegraphics[width=0.55\textwidth]{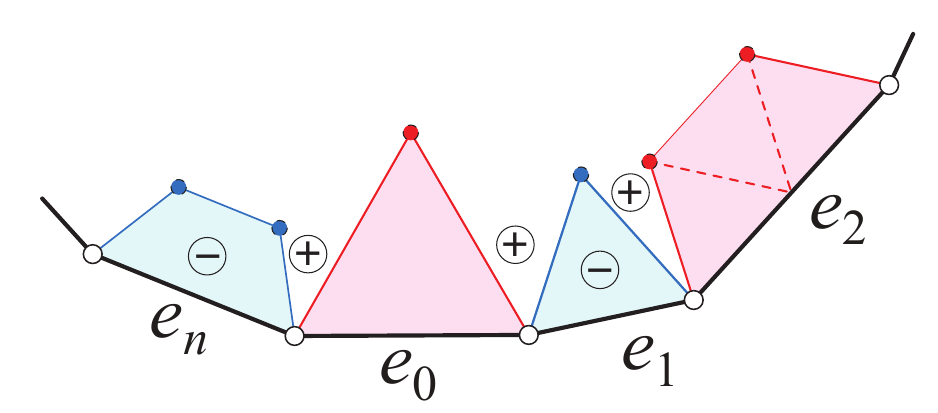}
\caption{
If red edges are nonflat, then faces incident to the base alternate between red faces and blue 
downward faces, each with one or two dome vertices.
}
\figlab{RedFaces_FigB}
\end{figure}
%%%%%%%%%%%%%%%%%%%%%%%%%%%%%%%%%Figure End

Our next goal is to show that 
a dome vertex cannot lie in more than one red or blue face.
%all the private and red vertices are distinct.  
This immediately gives a contradiction, since we need at least 
one dome vertex for each of the at least $8$ base edges
%$8$  private/red 
%vertices (one per base edge) 
but there are at most $6$ dome vertices.  

\begin{claim}
A dome vertex cannot be shared by two red/blue faces.
\end{claim}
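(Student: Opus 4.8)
The plan is to argue by contradiction, showing that if a single dome vertex $v$ were shared by two of the red/blue faces, then $v$ would be forced to have too many incident dome triangles or would connect non-consecutive base vertices within one face---both of which are impossible. This mirrors the key contradiction already used in step~(3) of the $n=12$ argument, where a dome vertex incident to two red edges was ruled out because it would place two non-consecutive base vertices in the same dome face. Here the situation is richer because a red/blue face may be a trapezoid (two dome vertices) rather than a triangle, so I first want to pin down exactly which base vertices $v$ ``sees'' through each face it belongs to.

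First I would set up the local structure at $v$. Recall from Observation~\ref{obs:down-face} and step~(1) that each red or blue face has $60^\circ$ face angles at its base endpoints, so if $v$ lies in such a face $f$, then $v$ is joined by a dome edge to at least one endpoint of the base edge carrying $f$ (and to both endpoints if $f$ is a triangle). Crucially, since $v$ is a dome vertex it has at most $5$ incident dome triangles (by the curvature bound underlying Lemma~\ref{lem:6dome}), hence at most $5$ incident dome edges arranged with total angle $<2\pi$ around $v$. Each red/blue face containing $v$ consumes a $60^\circ$ wedge at $v$ together with an edge down to the base. I would then count: if $v$ lies in two distinct red/blue faces $f$ and $f'$ on base edges $e$ and $e'$, these faces send edges from $v$ to base vertices belonging to $e$ and to $e'$.

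The heart of the argument is the following dichotomy. Either $e$ and $e'$ are non-consecutive base edges, in which case the two downward/red faces are separated along the base by at least one other base edge, and I would derive a contradiction exactly as in the $n=12$ case: consecutive dome edges from $v$ bounding a single face would reach non-consecutive base vertices, which is impossible because two non-consecutive base vertices cannot lie in a common dome face (convexity, together with the fact that intervening base edges have their own incident faces). Or $e$ and $e'$ are consecutive; but consecutive base edges have \emph{opposite} normal-tilt parities by step~(1) (red faces sit on even edges, blue faces on odd edges, and they alternate, as noted in Fig.~\figref{RedFaces_FigB}), and between any consecutive red and blue face there is an upward-normal face that is \emph{not} a red/blue face. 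I would argue that this intervening upward face separates $v$'s contributions to $f$ and $f'$, so the $60^\circ$ wedges for $f$, for the upward face, and for $f'$ together with the edges to both base edges force more than $5$ incident triangles, or force the edge from $v$ to a base vertex to serve two faces at once, contradicting that the base angle at that vertex is $\geq 120^\circ$ (so the blue face uses a full $60^\circ$ there with no room to spare).

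The step I expect to be the main obstacle is the consecutive-edge case, because the trapezoidal possibility means $v$ might be joined to only one endpoint of $e$ rather than both, loosening the edge-count bound, and one must carefully track the mandatory upward-normal face lying between the red and blue faces to recover the contradiction. The cleanest route is probably to observe that the mandatory intervening upward face at the shared base vertex already consumes a $60^\circ$ wedge and an edge, so that a single vertex $v$ feeding both the red face, the upward face, and the blue face around the same base corner would overfill the $\geq 120^\circ$ base angle's complementary cone at $v$; quantifying this wedge budget at $v$ (three $60^\circ$ wedges plus the return edges, against the $5$-triangle cap) is the calculation I would isolate and verify.
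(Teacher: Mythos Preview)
Your plan has the right ingredients---the $5$-triangle cap at $v$, the edges down to the base, and the ``non-consecutive base vertices in one face'' contradiction---but the organization into a consecutive/non-consecutive dichotomy on the base edges is not the cleanest cut, and one of your setup claims is wrong.

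First, the error: you assert that each red/blue face containing $v$ ``consumes a $60^\circ$ wedge at $v$.'' That is only true when the face is a triangle. When the face is a trapezoid (base angles $60^\circ$), the angle at the top corner $v$ is $120^\circ$, not $60^\circ$. This throws off your wedge budget and is exactly the reason your consecutive-edge case does not close. The paper's fix is a simple but decisive reduction: inside each red/blue face, pass to the equilateral triangle with apex $v$ and base on the base edge. Now each of the two triangles contributes precisely $60^\circ$ at $v$, and the possible ``base corners'' you reach from $v$ are points on $e_i$ and $e_j$ (not necessarily base vertices), which is all the argument needs.

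Second, once you make that reduction, the right case split is not on whether $e_i,e_j$ are consecutive but on the angular gap at $v$ between the two equilateral triangles. Either (i) they share a side, forcing a base vertex with only two incident dome triangles; or (ii) one gap is exactly $60^\circ$, so a single dome face fills it and touches both $e_i$ and $e_j$, which forces $e_i,e_j$ to be consecutive and again produces a base vertex with only two incident dome triangles; or (iii) each gap is at least $120^\circ$, giving total angle at $v$ at least $2\cdot 60^\circ + 2\cdot 120^\circ = 360^\circ$, contradicting the $5$-triangle cap. Your non-consecutive argument is essentially case~(ii), but you never justified why the relevant edges from $v$ would bound a \emph{single} face; that is precisely what the $60^\circ$-gap hypothesis provides. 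And your ``main obstacle'' (consecutive edges with a trapezoid) dissolves under the reduction: it just becomes case~(i) or~(ii).
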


\begin{proof} Suppose dome vertex $v$
is shared by %two red/blue faces, 
two faces, either of which may be red or blue,
say on base edges $e_i$ and $e_j$.
See Fig.~\figref{n810RedFace}.
In case either of these faces is a trapezoid, we focus on the equilateral triangle (a union of dome triangles) that is contained in the trapezoid and  has its top $60^\circ$ corner at $v$ and its other two ``base'' corners on the base edge.  
We consider what happens between the two triangles.  If they share a side, then the common base corner is a
base vertex with only two incident dome triangles, a contradiction.
Next, suppose 
there is a $60^\circ$ angle at $v$ between the two triangles. Then this angle is part of one dome face  that extends to a base corner of each of the two triangles. This is only possible if the two base points lie on a common base edge joining $e_i$ and $e_j$.  But then we have base vertices with only two incident dome triangles, 
again a contradiction.   
Thus there must be at least $120^\circ$ of surface angle at $v$ on each side between the two triangles, a contradiction to $v$ having at most $5$ incident dome triangles.
\end{proof}
%
%see Fig.~\figref{n810RedFace}.
%%%%%%%%%%%%%%%%%%%%%%%%%%%%%%%%%Figure Begin
\begin{figure}[htbp]
\centering
\includegraphics[width=0.75\textwidth]{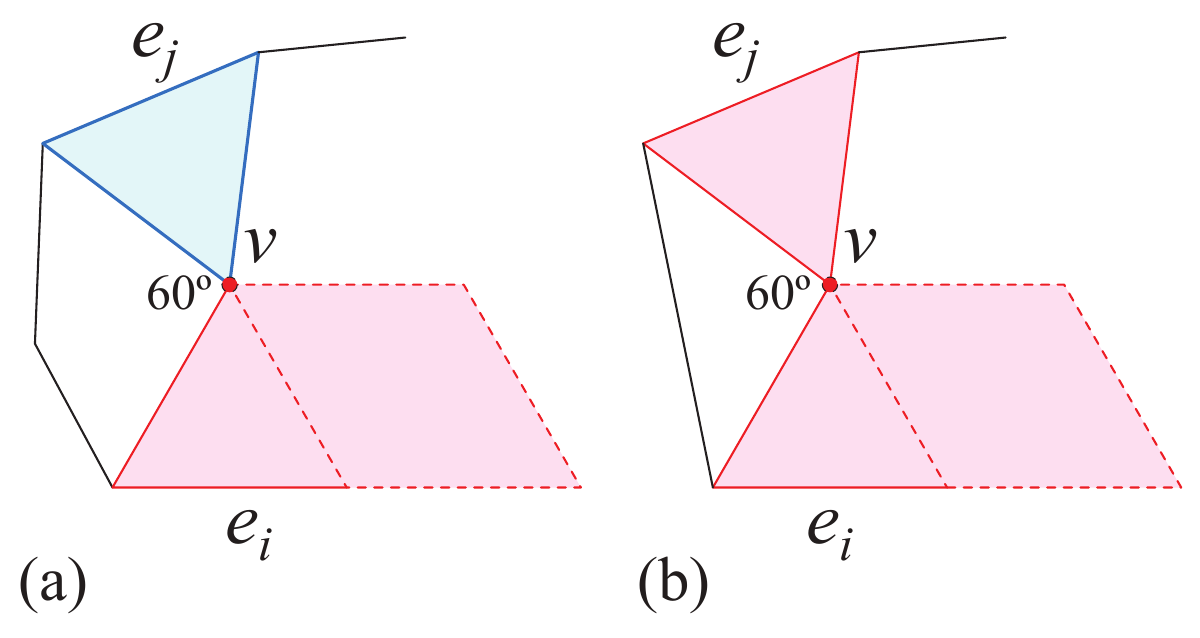}
\caption{(a)~A dome vertex $v$ cannot be in a red face and a blue face.
%A red vertex cannot coincide with a private vertex $v$. 
(b)~Two red faces cannot share a vertex $v$.
% \Anna{If my condensed proof above makes sense (does it?), then please remove the little blue triangle on $e$ in part (b). There is not too much reason for having (a) and (b) now, except that there is the parity issue for (a) ($j$ even, $i$ odd implies an even number of base edges between them (which turns out not to be needed in the proof after all)).  Still, I suggest leaving (a) and (b).}
%\JOR{I will study the proof...}
}
\figlab{n810RedFace}
\end{figure}
%%%%%%%%%%%%%%%%%%%%%%%%%%%%%%%%%Figure End

\hide{
Our next goal is to show that there are not enough dome vertices to provide these red vertices. 
%
%\Anna{I find Figure B here \url{https://coauthor.csail.mit.edu/file/vzDXismkwzGbvmWy7} very helpful if you have time and energy for it.}
% \JOR{Followed FigB even different sizes of
% triangles and traps.}
% \Anna{Great!}
%Call each face bound by red edges at the base \emph{red faces}.
%Each has one or two red vertices.
We first prove (in (a) below) that a red vertex cannot be a private vertex.  
For $n=12$ this %does it, 
settles matters, since all $6$ dome vertices are private, so there cannot be any red vertices.  For $n=10$ we need $5$ private vertices; there may be one non-private dome vertex left over, but we cannot have all $5$ red faces (with $10$ red edges) meeting at one vertex.  For $n=8$ we may have two non-private dome vertices and we cannot rule out shared red vertices just by counting. So we need a second claim~((b) below) to complete the argument.
%Now we address two claims.
\begin{enumerate}[(a)]
\item A red vertex cannot
\anna{be} 
%coincide with 
a private vertex.
See Fig.~\figref{n810RedFace}(a).

%\Anna{I've gone back to the longer version from coauthor}
Suppose dome vertex $v$ is a \anna{private vertex of the blue} face on base edge $e_j$, $j$ even, and also a red vertex of the red face on some edge $e_i$, $i$ odd. 
\anna{In case either of these faces is a trapezoid, we focus on the equilateral triangle (a union of dome triangles) that is contained in the trapezoid and  has its top $60^\circ$ corner at $v$ and its other two (``base'') corners on the base edge.  
We then have one red and one blue triangle incident to $v$; see 
Fig.~\figref{n810RedFace}(a)).
We consider what happens between the red and blue triangle.  If they share a side, then the common base corner is a
base vertex with only two dome triangles, a contradiction. 
If there is a $60^\circ$ angle at $v$ between the two triangles, then TO BE FILLED IN, a contradiction.  Thus there must be at least $120^\circ$ of surface angle at $v$ on each side between the red and blue triangles, a contradiction to $v$ having at most $5$ incident dome triangles.}

\Anna{I'm rewriting because I'm bothered by ``a non-flat edge'' from $v$ to the base -- in what sense is that an edge? It's just a straight segment composed of dome triangle edges.}

If base edges $e_i$ and $e_j$ were consecutive on the base, then the faces would share a base vertex $u$ and share vertex $v$ which means they must share the edge $uv$---\anna{a contradiction to $u$ having three incident dome triangles.}
%but
%they have different edges at $u$, a contradiction.  
Thus $e_i$ and $e_j$ are not consecutive.  This implies that $v$ has two edges (one of which may be flat) to points on $e_j$ and two different edges (one of which may be flat) to points on $e_i$.    Between these two pairs of edges there are two gaps (in the cyclic order of edges around $v$. Because $v$ has at most 5 incident triangles, one of the gaps must be a $60^\circ$ face angle at $v$ bounded by one edge to a point in $e_i$ and one edge to a point in $e_j$. 
\hide{The edges $e_i$ and $e_j$ in the figure cannot be consecutive, for then they would share
a red/blue edge, but there are no such edges.
Because $v$ has at most %degree-$5$, 
\jor{$5$ incident triangles,}
one of the gaps between its \jor{incident} edges must be $60^\circ$.
} % end hide
But this is impossible by %the
observation~\ref{item:consecutive-edge-obs} above.

\item Two red faces cannot share a vertex $v$.
See Fig.~\figref{n810RedFace}(b).
Again because $v$ %has 
\jor{is} at most degree-$5$, one of the gaps between its edges must be $60^\circ$.
%The observation 
\jor{Observation~\ref{item:consecutive-edge-obs}} implies that the two edges bounded by this $60^\circ$ must go to the same
base edge $e$. But then those edges encompass a private vertex $u$. 
The only possible third edge incident to $u$ is to $v$, but that would split the $60^\circ$ angle there,
a contradiction.
\end{enumerate}
} % end long hide

This completes the proof that the red edges are flat.
We conclude that
faces on even %odd 
base edges have
$120^\circ$ angles at the base, 
and upward normals, as shown
in Fig.~\figref{n12Proof} (except that the blue faces need not be triangles).
\item 
The next goal is to prove that every blue face is a triangle and that the other faces on base edges are trapezoids.  
We accomplish this by analyzing
the local structure at a %private vertex $v$, 
dome vertex $v$ in a blue face.
%which must 
%be a $V_4$ or $V_5$ vertex (i.e., with
%$4$ or $5$ incident dome triangles).
% \Anna{I've omitted that $v$ is a $V_4$ or $V_5$ vertex. 
% Once upon a time we excluded $V_3$ vertices, but now we don't.  Our proof handles them ok.}
%

Vertex $v$ lies in a blue downward face $f$,
say on base edge 
$b_1 b_2 = e_1$.
%as in Fig.~\figref{PrivateVertex}.
Suppose, without loss of generality, that 
$v b_1$
%$v b_2$ 
is an edge of face $f$; then segment 
$v b_2$
%$v b_3$ 
lies in $f$ but is an edge of $f$  only if $f$ is a triangle.
Name the edges (flat or nonflat) between the dome triangles at $v$ as shown in Fig.~\figref{PrivateVertex}(a):
in the 
clockwise
direction from 
$v b_1$
%$v b_2$ 
is the edge $e_l$; in the counterclockwise
direction, there is an edge in $f$, then the edge $e_r$; and if there is a fifth edge, it is $e_c$ between $e_r$ and $e_l$.  
% \Anna{a bit awkward!} 
The upward face sharing edge 
$v b_1$
%$v b_2$ 
has face angle $120^\circ$ incident to 
$b_1$.
%$b_2$.  
Edge $e_l$ is in or on the boundary of this face (we do not assume the face is a quadrilateral), and
thus $e_l$ is horizontal and parallel to 
$e_0$.
%$e_1$.
%
%see Fig.~\figref{PrivateVertex}.
%%%%%%%%%%%%%%%%%%%%%%%%%%%%%%%%%Figure Begin
\begin{figure}[htbp]
\centering
\includegraphics[width=1.0\textwidth]{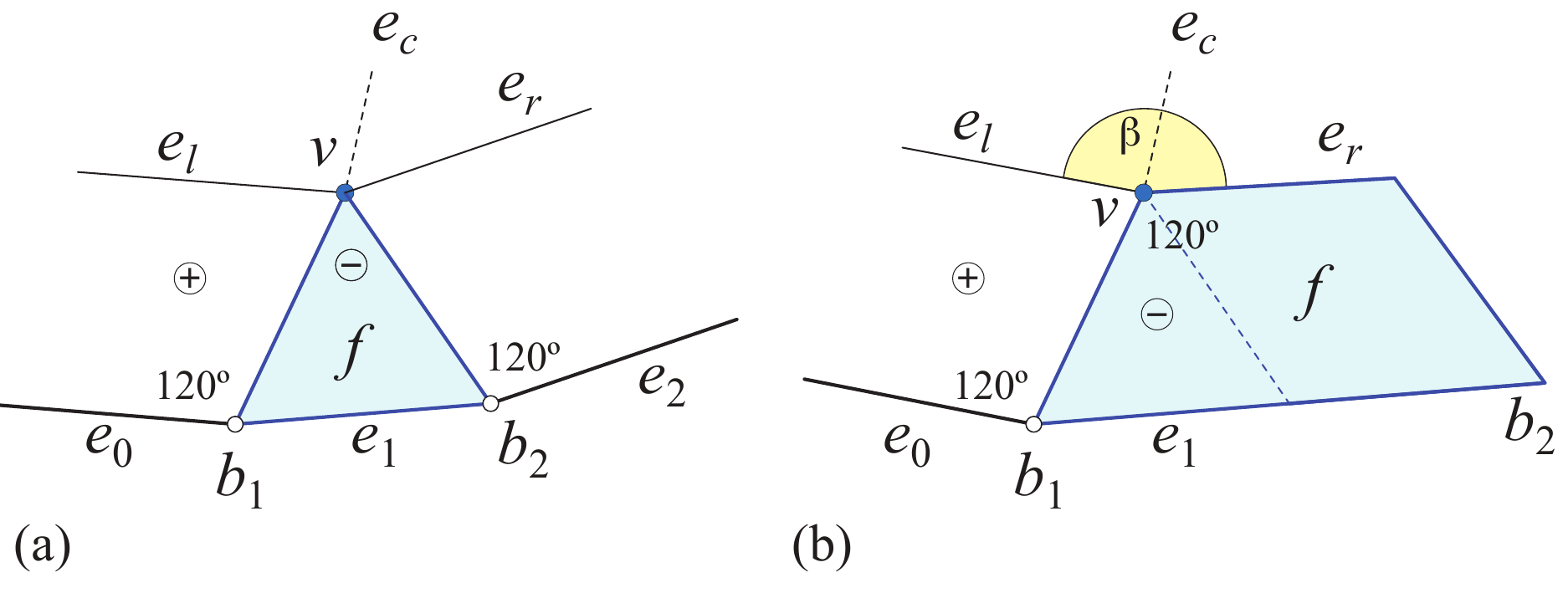}
\caption{Dome vertex $v$ in a blue face $f$. 
%Private vertex $v$. 
Face $f$ cannot be a quadrilateral.
}
\figlab{PrivateVertex}
\end{figure}
%%%%%%%%%%%%%%%%%%%%%%%%%%%%%%%%%Figure End
\item We next rule out 
face $f$ being a trapezoid.
%the face to the other side of $f$ being a quadrilateral.
Suppose it is. 
Then the angle of $f$ at $v$ is $120^\circ$ and edge $e_r$ is an edge of trapezoid $f$, thus horizontal and parallel to $e_1$.
See Fig.~\figref{PrivateVertex}(b).
But this means that the angle between $e_l$ and $e_r$ is the same as the base angle $\b$
which is greater than $120^\circ$.
Even if $v$ 
is %degree-$5$, 
a $V_5$ vertex,
there is only one edge $e_c$ between $e_l$ and $e_r$,
not enough to cover $\b$.
Therefore $f$ is a triangle
%with edge $v b_2$, and the 
$b_1 b_2 v$, and
edge $e_r$ lies outside $f$.
\item 
The same argument as above for $e_l$ then shows that
$e_r$ is  horizontal and parallel to 
\anna{$e_2$}.
%$e_3$.
The angle between $e_l$ and $e_r$ is then $90^\circ$, $108^\circ$, or $120^\circ$ for $n=8,10,12$ respectively.  This angle is too large to be covered by a single triangle,
so it requires at least two triangles.
Therefore $v$ must be a $V_5$ vertex,
and the edge $e_c$ is uniquely determined 
and the edges $e_l$ and $e_r$ are nonflat.
\item 
To wrap up,
all the blue downward %base 
faces are triangles, and
their dome vertices
%the private vertices 
are $V_5$ vertices.
The faces on the even %odd 
base edges are 
%quadrilaterals 
trapezoids with face angles of $120^\circ$ at the base and angles of $60^\circ$ at the dome vertices. 
(Recall the earlier $n=10$ example in Fig.~\figref{IcosaSlicesDifferent}(b).)
The trapezoids have the same dihedral angle and join the 
%private vertices 
dome vertices of the blue face, so
all 
those dome vertices
%the private vertices 
must be at the same height from the base 
%which implies,
and
the odd %even 
base edges must have equal length, say $\ell$.
%\Anna{Is this what we called the length?}
%\JOR{Yes.}
At the level of 
%the private vertices, 
those dome vertices
we have an equiangular $4$-, $5$- or $6$-gon. 
The length of the edge parallel to base edge $e_i$ is
$|e_i| + \ell$.
%$|e_i| +1$.
%
\end{enumerate}
This completes the analysis of the edge-length constraints for $n=8,10,12$,
and so proves part (b) of Theorem~\thmref{EquiAng}.

\subsection{Not all domes are slices of general deltahedra}
\seclab{NotSlice}
%\JOR{Anna's argument nearly verbatim.}
From Section~\secref{n810Proof} above, we know that for $n=8$,
the even edges are those of a rectangle, $b,c,b,c$.
Furthermore, the construction of the dome is unique---in particular,
the faces on the odd edges must have downward normals and the faces on the even edges must have upward normals.
See Fig.~\figref{Octa_4x2}.

We now argue that the octagon domes where $b \neq c$ do not arise from slicing a generalized convex deltahedron.
If they did, it would be possible to construct domes on the top and the bottom of the octagon such that the two halves form a convex polyhedron. But on the bottom, by convexity, the faces on the odd edges would then have upward normals (after we flip it). Then the even edges would have to be the ones with downward normals so their lengths would need to be equal.
So the construction is not realizable if $b \neq c$.

A similar argument can be made for $n=12$.

%\clearpage
\section{Further Results and Open Questions}
\label{sec:further-results}
% Anna: this sentence seems redundant:
%We have proven our main result, 
%Theorem~\thmref{EquiAng}.

%-------------------------------------------------------------------------------------------------

%\section{Open Problems}
%\noindent
We have characterized equiangular domeable polygons.
When we drop the equiangular condition,
many open problems remain. Here are a few.
\begin{enumerate}[(1)]
\item Is there any convex $n$-gon with $n > 12$ that can be domed?
A rough bound is $n\le 55$: every dome vertex has curvature at least $\frac{\pi}{3}$ so there are at most 11 dome vertices; every base vertex must be adjacent to a dome vertex and dome vertices have degree at most~$5$.
We can prove the stronger bound $n \le 24$.
%\JOR{Leaving this as a claim.}
%
\item Is there any convex $7$-gon that can be domed?
Fig.~\figref{9-gon_3Views} shows a ``one-story'' doming of a 
(non-equiangular) $9$-gon.
An $11$-gon can be constructed similarly, 
but the same construction scheme fails for a $7$-gon.
%\JOR{Reviewer~4 wants to see the $11$-gon. I think not worth the effort.}
We do not know if there is a domeable $7$-gon via some other construction.
%We have constructed $9$- and $11$-gons 
%(non-equiangular)
%that can be domed.
%See Fig.~\figref{9-gon_3Views}.
%
\item 
Is there any non-equilateral triangle that can be domed?
Glazarin and Pak conjectured~\cite{glazyrin2022domes} that,
even under their looser conditions,  
an isosceles triangle with edge lengths $2,2,1$
cannot be spanned.
\end{enumerate}
%%%%%%%%%%%%%%%%%%%%%%%%%%%%%%%%%Figure Begin
\begin{figure}[htbp]
\centering
\includegraphics[width=0.8\textwidth]{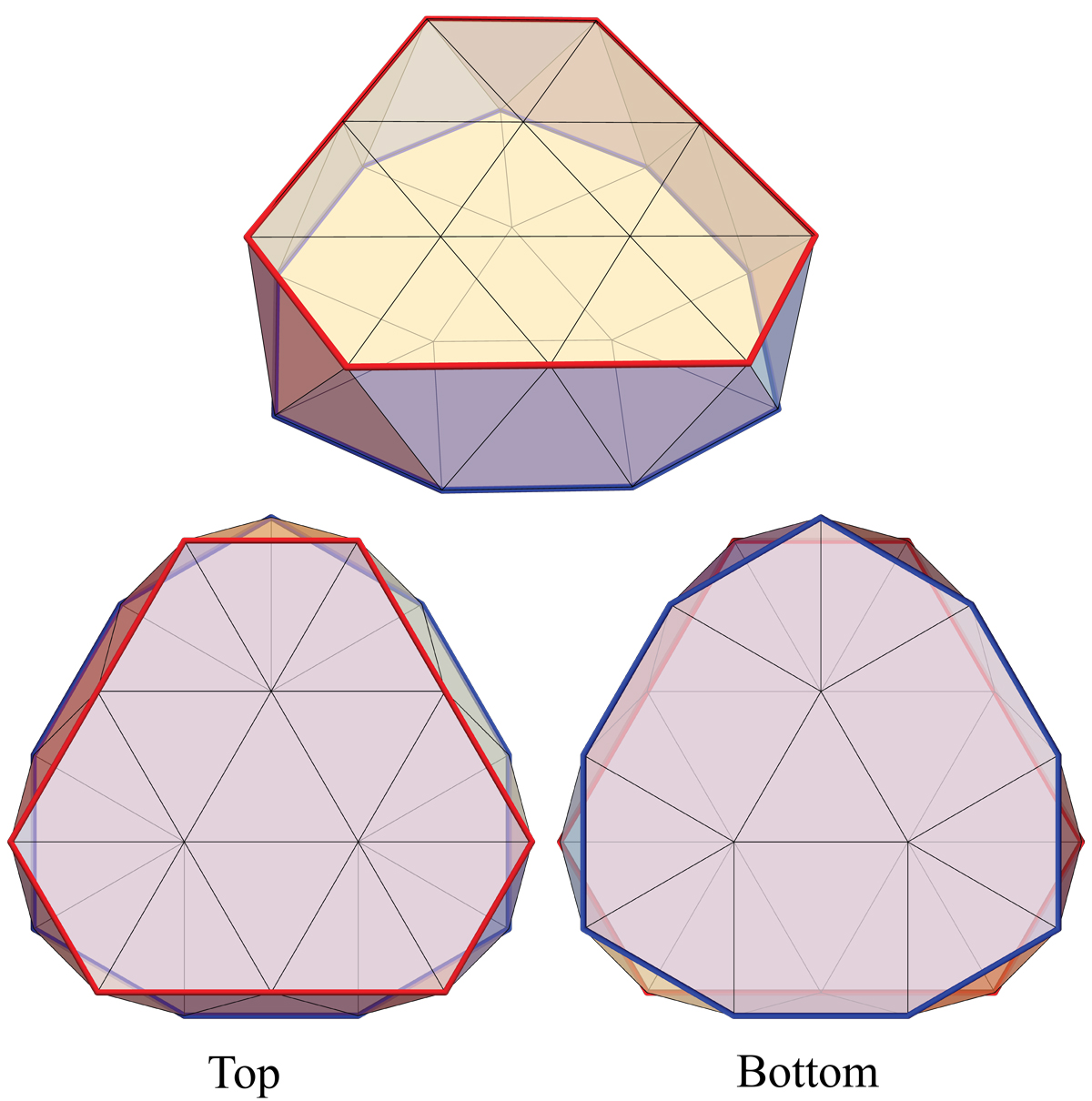}
\caption{The top (red) is a polyiamond of $13$ equilateral triangles.
The base (blue) is a $9$-gon with base angles $120^\circ$ and $150^\circ$.
}
\figlab{9-gon_3Views}
\end{figure}
%%%%%%%%%%%%%%%%%%%%%%%%%%%%%%%%%Figure End

\subparagraph*{Acknowledgments.}
We benefited from suggestions from six anonymous reviewers
and from comments at the EuroCG presentation~\cite{DomesEuroCG}.

%------------------------------------------------------------
%\bibliographystyle{alpha}
\bibliography{refs}
%------------------------------------------------------------
%-------------------------------------------------------------------------------------------------
%-------------------------------------------------------------------------------------------------
%-------------------------------------------------------------------------------------------------
%-------------------------------------------------------------------------------------------------
%-------------------------------------------------------------------------------------------------

\end{document}